\colorlet{MyBlue}{DodgerBlue!75!Black}
\newcommand{\R}{\mathbb{R}}
\newcommand{\Q}{\mathbb{Q}}
\DeclareMathOperator{\argdot}{\cdotp}
\DeclareMathOperator{\bd}{bd}
\DeclareMathOperator{\ex}{\mathbb{E}}
\DeclareMathOperator{\one}{\mathds{1}}
\DeclareMathOperator{\prob}{\mathbb{P}}
\DeclareMathOperator{\supp}{supp}
\newcommand{\bvec}{e}
\newcommand{\dd}{\:d}
\newcommand{\eps}{\varepsilon}
\newcommand{\exclude}[1]{\operatorname\setminus\left\{#1\right\}}
\newcommand{\filter}{\mathcal{F}}
\newcommand{\from}{\colon}
\newcommand{\pd}{\partial}
\DeclareMathOperator{\simplex}{\Delta}
\newcommand{\intsimplex}{\simplex^{\circ}}
\newcommand{\wilde}{\widetilde}
\newcommand{\abs}[1]{\left\lvert #1 \right\rvert}
\newcommand{\norm}[1]{\left\| #1 \right\|}
\newcommand{\braket}[2]{\left\langle #1 \middle\vert  #2 \right\rangle}
\newcommand{\smallbraket}[2]{\langle #1 \vert  #2 \rangle}
\newcommand{\dis}{\displaystyle}
\newcommand{\txs}{\textstyle}
\newcommand{\insum}{\sum\nolimits}
\theoremstyle{plain}
\newtheorem{theorem}{Theorem}
\newtheorem*{corollary*}{Corollary}
\newtheorem{lemma}[theorem]{Lemma}
\newtheorem{proposition}[theorem]{Proposition}
\theoremstyle{definition}
\newtheorem{definition}[theorem]{Definition}
\newtheorem*{definition*}{Definition}
\theoremstyle{remark}
\newtheorem{remark}{Remark}
\newtheorem*{remark*}{Remark}
\numberwithin{equation}{section}
\numberwithin{theorem}{section}
\numberwithin{remark}{section}
\numberwithin{example}{section}
\newcommand{\play}{\mathcal{N}}
\newcommand{\act}{\mathcal{A}}
\newcommand{\set}{\mathcal{S}}
\newcommand{\pay}{u}
\newcommand{\payv}{v}
\newcommand{\modpayv}{\payv^{\sigma}}
\newcommand{\mutpayv}{\payv^{\eta}}
\newcommand{\strat}{\mathcal{X}}
\newcommand{\intstrat}{\strat^{\circ}}
\newcommand{\game}{\mathfrak{G}}
\newcommand{\modgame}{\game^{\sigma}}
\newcommand{\mutgame}{\game^{\eta}}
\newcommand{\dkl}{D_{\textup{KL}}}
\newcommand{\eq}{x^{\ast}}
\begin{document}

%*************************************************************
%*****    FRONT MATTER AND METADATA
%*************************************************************

%----------------------------------------------------------------------
%%% TITLE & AUTHORS
%----------------------------------------------------------------------
\title
{Imitation dynamics with payoff shocks}

\author[P.~Mertikopoulos]{Panayotis Mertikopoulos}
\address
[P.~Mertikopoulos]
{CNRS (French National Center for Scientific Research), LIG, F-38000 Grenoble, France\\
and
Univ. Grenoble Alpes, LIG, F-38000 Grenoble, France}
\email{\href{mailto:panayotis.mertikopoulos@imag.fr}{panayotis.mertikopoulos@imag.fr}}
\urladdr{\url{http://mescal.imag.fr/membres/panayotis.mertikopoulos}}

\author[Y.~Viossat]{Yannick Viossat}
\address
[Y.~Viossat]
{PSL, Université Paris\textendash Dauphine, CEREMADE UMR7534, Place du Maréchal de Lattre de Tassigny, 75775 Paris, France}
\email{\href{mailto:viossat@ceremade.dauphine.fr}{viossat@ceremade.dauphine.fr}}

\thanks{Supported in part by the French National Research Agency under grant no. GAGA--13--JS01--0004--01 and the French National Center for Scientific Research (CNRS) under grant no. PEPS--GATHERING--2014.}

%\dedication{Dedicated to Abraham ``Merale'' Neyman on the occasion of his 66th birthday.}

%----------------------------------------------------------------------
%%% KEYWORDS & ABSTRACT
%----------------------------------------------------------------------
\begin{abstract}
%----------------------------------------------------------------------
%%% ABSTRACT
%----------------------------------------------------------------------
% !TEX root = ./ShockDynamics.tex

We investigate the impact of payoff shocks on the evolution of large populations of myopic players that employ simple strategy revision protocols such as the ``imitation of success''.
In the noiseless case, this process is governed by the standard (deterministic) replicator dynamics;
in the presence of noise however, the induced stochastic dynamics are different from previous versions of the stochastic replicator dynamics (such as the aggregate-shocks model of \citealp{FH92}).
In this context, we show that strict equilibria are always stochastically asymptotically stable, irrespective of the magnitude of the shocks;
on the other hand, in the high-noise regime, non-equilibrium states may also become stochastically asymptotically stable and dominated strategies may survive in perpetuity (they become extinct if the noise is low).
Such behavior is eliminated if players are less myopic and revise their strategies based on their cumulative payoffs.
In this case, we obtain a second order stochastic dynamical system whose attracting states coincide with the game's strict equilibria and where dominated strategies become extinct (a.s.), no matter the noise level.
\keywords{%
Dominated strategies
\and
evolutionary dynamics
\and
replicator dynamics
\and
revision protocols
\and
aggregate payoff shocks
\and
strict equilibria.}
%\subclass{MSC codes go here}
\end{abstract}

\maketitle

%----------------------------------------------------------------------
%%% TABLE OF CONTENTS
%----------------------------------------------------------------------
%\setcounter{tocdepth}{1}
%\tableofcontents

%----------------------------------------------------------------------
%%% ACRONYMS
%----------------------------------------------------------------------
\newacro{iid}[i.i.d.]{independent and identically distributed}
\newacro{SDE}{stochastic differential equation}
\newacro{KL}{Kullback\textendash Leibler}

%*************************************************************
%*****    BODY TEXT
%*************************************************************

%----------------------------------------------------------------------
%%% INTRODUCTION
%----------------------------------------------------------------------
\section{Introduction}
\label{sec:introduction}
%----------------------------------------------------------------------
%%% INTRODUCTION
%----------------------------------------------------------------------
% !TEX root = ./ShockDynamics.tex

Evolutionary game dynamics study the evolution of behavior in populations of boundedly rational agents that interact strategically.
The most widely studied dynamical model in this context is the replicator dynamics:
introduced in biology as a model of natural selection \citep{TJ78}, the replicator dynamics also arise from models of imitation of successful individuals \citep{Sch98,Wei95,BW96} and from models of learning in games \citep{Rus99,HSV09,MM10}.
Mathematically, they stipulate that the growth rate of the frequency of a strategy is proportional to the difference between the payoff of individuals playing this strategy and the mean payoff in the population.
These payoffs are usually assumed deterministic:
this is typically motivated by a large population assumption and the premise that, owing to the law of large numbers, the resulting mean field provides a good approximation of a more realistic but less tractable stochastic model. 
This approach makes sense when the stochasticity affecting payoffs is independent across individuals playing the same strategies, but it fails when the payoff shocks are aggregate, that is, when they affect all individuals playing a given strategy in a similar way.

Such aggregate shocks are not uncommon.
\cite{Ber14} recounts the story of squirrels stocking nuts for the winter months:
squirrels may stock a few or a lot of nuts, the latter leading to a higher probability of surviving a long winter but a higher exposure to predation.
The unpredictable mildness or harshness of the ensuing winter will then favor one of these strategies in an aggregate way (see also \citealp[Sec.~3.1.1]{RS11}, and references therein).
In traffic engineering, one might think of a choice of itinerary to go to work:
fluctuations of traffic on some roads affect all those who chose them in a similar way. 
Likewise, in data networks, a major challenge occurs when trying to minimize network latencies in the presence of stochastic disturbances:
in this setting, the travel time of a packet in the network does not depend only on the load of each link it traverses, but also on unpredictable factors such as random packet drops and retransmissions, fluctuations in link quality, excessive backlog queues, etc. \citep{BG92}.

Incorporating such aggregate payoff shocks in the biological derivation of the replicator dynamics leads to the stochastic replicator dynamics of \cite{FH92}, later studied by (among others) \cite{Cab00}, \cite{Imh05} and \cite{HI09}.
To study the long-run behavior of these dynamics, \cite{Imh05} introduced a modified game where the expected payoff of a strategy is penalized by a term which increases with the variance of the noise affecting this strategy's payoff (see also \citealp{HI09}).
Among other results, it was then shown that
\begin{inparaenum}
[\itshape a\upshape)]
\item
strategies that are iteratively (strictly) dominated in this modified game become extinct almost surely;
and
\item
strict equilibria of the modified game are stochastically asymptotically stable.
\end{inparaenum}

In this biological model, noise is detrimental to the long-term survival of strategies:
a strategy which is strictly dominant on average (i.e. in the original, unmodified game) but which is affected by shocks of substantially higher intensity becomes extinct almost surely.
By contrast, in the learning derivation of the replicator dynamics, noise leads to a stochastic exponential learning model where only iteratively undominated strategies survive, irrespective of the intensity of the noise \citep{MM10};
as a result the frequency of a strictly dominant strategy converges to $1$ almost surely.
Moreover, strict Nash equilibria of the original game remain stochastically asymptotically stable (again, independently of the level of the noise), so the impact of the noise in the stochastic replicator dynamics of exponential learning is minimal when compared to the stochastic replicator dynamics with aggregate shocks.

In this paper, we study the effect of payoff shocks when the replicator equation is seen as a model of imitation of successful agents.
As in the case of \cite{Imh05} and \cite{HI09}, it is convenient to introduce a noise-adjusted game which is reduced to the original game in the noiseless, deterministic regime. 
We show that:
\begin{inparaenum}
[\itshape a\upshape)]
\item
strategies that are iteratively strictly dominated in the modified game become extinct almost surely;
and
\item
strict equilibria of the modified game are stochastically asymptotically stable.
\end{inparaenum}
However, despite the formal similarity, our results are qualitatively different from those of \cite{Imh05} and \cite{HI09}:
in the modified game induced by imitation of success in the presence of noise, noise is not detrimental per se.
In fact, in the absence of differences in expected payoffs, a strategy survives with a probability that does not depend on the random variance of its payoffs:
a strategy's survival probability is simply its initial frequency.
Similarly, even if a strategy which is strictly dominant in expectation is subject to arbitrarily high noise, it will always survive with positive probability;
by contrast, such strategies become extinct (a.s.) in the aggregate shocks model of \cite{FH92}. 

That said, the dynamics' long-term properties change dramatically if players are less ``myopic'' and, instead of imitating strategies based on their instantaneous payoffs, they base their decisions on the cumulative payoffs of their strategies over time.
In this case, we obtain a second-order stochastic replicator equation which can be seen as a noisy version of the higher order game dynamics of \cite{LM13}.
Thanks to this payoff aggregation mechanism, the noise averages out in the long run and we recover results that are similar to those of \cite{MM10}:
strategies that are dominated in the original game become extinct (a.s.) and strict Nash equilibria attract nearby initial conditions with arbitrarily high probability.

%----------------------------------------------------------------------
%%% OUTLINE
%----------------------------------------------------------------------
\subsection{Paper Outline}
\label{sec:outline}

The remainder of our paper is structured as follows:
in Section \ref{sec:model}, we present our model and we derive the stochastic replicator dynamics induced by imitation of success in the presence of noise.
Our long-term rationality analysis begins in Section \ref{sec:analysis} where we introduce the noise-adjusted game discussed above and we state our elimination and stability results in terms of this modified game.
In Section \ref{sec:cumulative}, we consider the case where players imitate strategies based on their cumulative payoffs and we show that the adjustment due to noise is no longer relevant.
Finally, in Section \ref{sec:discussion}, we discuss some variants of our core model related to different noise processes.

%----------------------------------------------------------------------
%%% NOTATION
%----------------------------------------------------------------------
\subsection{Notational conventions}
\label{sec:notation}

The real space spanned by a finite set $\set = \{s_{\alpha}\}_{\alpha=1}^{d+1}$ will be denoted by $\R^{\set}$ and we will write $\{\bvec_{s}\}_{s\in\set}$ for its canonical basis;
in a slight abuse of notation, we will also use $\alpha$ to refer interchangeably to either $s_{\alpha}$ or $\bvec_{\alpha}$ and we will write $\delta_{\alpha\beta}$ for the Kronecker delta symbols on $\set$.
The set $\simplex(\set)$ of probability measures on $\set$ will be identified with the $d$-dimensional simplex $\simplex = \{x\in \R^{\set}: \sum_{\alpha} x_{\alpha} =1 \text{ and }x_{\alpha}\geq 0\}$ of $\R^{\set}$ and the relative interior of $\simplex$ will be denoted by $\intsimplex$;
also, the support of $p\in\simplex(\set)$ will be written $\supp(p) = \{\alpha\in\set: p_{\alpha}>0\}$.
For simplicity, if $\{\set_{k}\}_{k\in\play}$ is a finite family of finite sets, we use the shorthand $(\alpha_{k};\alpha_{-k})$ for the tuple $(\dotsc,\alpha_{k-1},\alpha_{k},\alpha_{k+1},\dotsc)$ and we write $\sum_{\alpha}^{k}$ instead of $\sum_{\alpha\in\set_{k}}$. 
%\YV{I find this paragraph a bit hard to follow. Let's keep it like that for now, due to the time constants, but maybe to rewrite for the revised version.}
%\PM{Well, it's only our notational conventions, so I thought it best to be quick about things.
%Keep now/revise later or expand now, as you prefer.}
%\YV{I find the way the notation is explained not common, and for this reason hard to follows, but not urgent, so forget it for now as I said.}
Unless mentioned otherwise, deterministic processes will be represented by lowercase letters, while their stochastic counterparts will be denoted by the corresponding uppercase letter.
Finally, we will suppress the dependence of the law of a process $X(t)$ on its initial condition $X(0) = x$, and we will write $\prob$ instead of $\prob_{\!x}$.

%----------------------------------------------------------------------
%%% DYNAMICS
%----------------------------------------------------------------------
\section{The model}
\label{sec:model}
%----------------------------------------------------------------------
%%% MODEL
%----------------------------------------------------------------------
% !TEX root = ./ShockDynamics.tex

In this section, we recall a few preliminaries from the theory of population games and evolutionary dynamics, and we introduce the stochastic game dynamics under study.

%----------------------------------------------------------------------
%%% POPULATION GAMES
%----------------------------------------------------------------------
\subsection{Population games}
\label{sec:games}

Our main focus will be games played by populations of nonatomic players.
Formally, such games consist of a finite set of player \emph{populations} $\play = \{1,\dotsc,N\}$ (assumed for simplicity to have unit mass), each with a finite set of \emph{pure strategies} (or \emph{types}) $\act_{k} = \{\alpha_{k,1},\alpha_{k,2},\dotsc\}$, $k\in\play$.
During play, each player chooses a strategy and the \emph{state} of each population is given by the distribution $x_{k} = (x_{k\alpha})_{\alpha\in\act_{k}}$ of players employing each strategy $\alpha\in\act_{k}$.
Accordingly, the \emph{state space} of the $k$-th population is the simplex $\strat_{k} \equiv \simplex(\act_{k})$ and the state space of the game is the product $\strat \equiv \prod_{k} \strat_{k}$.

The payoff to a player of population $k\in\play$ playing $\alpha\in\act_{k}$ is determined by the corresponding \emph{payoff function} $\payv_{k\alpha}\from \strat\to\R$ (assumed Lipschitz).%
\footnote{Note that we are considering general payoff functions and not only multilinear (resp. linear) payoffs arising from asymmetric (resp. symmetric) random matching in finite $N$-person (resp. $2$-person) games.
This distinction is important as it allows our model to cover e.g. general traffic games as in \cite{San10}.}
Thus, given a population state $x\in\strat$, the average payoff to population $k$ will be
\begin{equation}
\label{eq:pay-average}
%\pay_{k}(x)
	\insum_{\alpha}^{k} x_{k\alpha} \payv_{k\alpha}(x)
	= \braket{\payv_{k}(x)}{x},
\end{equation}
where $\payv_{k}(x) \equiv (\payv_{k\alpha}(x))_{\alpha\in\act_{k}}$ denotes the \emph{payoff vector} of the $k$-th population in the state $x\in\strat$.
Putting all this together, a \emph{population game} is then defined as a tuple $\game \equiv \game(\play,\act,\payv)$ of nonatomic player populations $k\in\play$, their pure strategies $\alpha\in\act_{k}$ and the associated payoff functions $\payv_{k\alpha}\from\strat\to\R$.

In this context, we say that a pure strategy $\alpha\in\act_{k}$ is \emph{dominated} by $\beta\in\act_{k}$ if
\begin{equation}
\label{eq:dom-pure}
\payv_{k\alpha}(x)
	< \payv_{k\beta}(x)
	\quad
	\text{for all $x\in\strat$,}
\end{equation}
i.e. the payoff of an $\alpha$-strategist is always inferior to that of a $\beta$-strategist.
More generally (and in a slight abuse of terminology), we will say that $p_{k}\in\strat_{k}$ is dominated by $p_{k}'\in\strat_{k}$ if
\begin{equation}
\label{eq:dom-mixed}
\braket{\payv_{k}(x)}{p_{k}}
	< \smallbraket{\payv_{k}(x)}{p_{k}'}
	\quad
	\text{for all $x\in\strat$,}
\end{equation}
i.e. when the average payoff of a small influx of mutants in population $k$ is always greater when they are distributed according to $p_{k}'$ rather than $p_{k}$ (irrespective of the incumbent population state $x\in\strat$).

Finally, we will say that the population state $\eq\in\strat$ is at \emph{Nash equilibrium} if
\begin{equation}
\label{eq:Nash}
\tag{NE}
\payv_{k\alpha}(\eq)
	\geq \payv_{k\beta}(\eq)
	\quad
	\text{for all $\alpha\in\supp(\eq_{k})$ and for all $\beta\in\act_{k}$, $k\in\play$.}
\end{equation}
In particular, if $\eq$ is pure (in the sense that $\supp(\eq)$ is a singleton) and \eqref{eq:Nash} holds as a strict inequality for all $\beta\notin\supp(\eq_{k})$, $\eq$ will be called a \emph{strict equilibrium}.

\begin{remark}
Throughout this paper, we will be suppressing the population index $k\in\play$ for simplicity, essentially focusing in the single-population case. 
This is done only for notational clarity:
all our results apply as stated to the multi-population model described in detail above.
\end{remark}

%----------------------------------------------------------------------
%%% REVISION PROTOCOLS
%----------------------------------------------------------------------
\subsection{Revision protocols}
\label{sec:revision}

A fundamental evolutionary model in the context of population games is provided by the notion of a \emph{revision protocol}.
Following \citet[Chapter~3]{San10}, it is assumed that each nonatomic player receives an opportunity to switch strategies at every ring of an independent Poisson alarm clock, and this decision is based on the payoffs associated to each strategy and the current population state.
The players' revision protocol is thus defined in terms of the \emph{conditional switch rates} $\rho_{\alpha\beta} \equiv \rho_{\alpha\beta}(\payv,x)$ that determine the relative mass $dx_{\alpha\beta}$ of players switching from $\alpha$ to $\beta$ over an infinitesimal time interval $dt$:%
\footnote{In other words, $\rho_{\alpha\beta}$ is the probability of an $\alpha$-strategist becoming a $\beta$-strategist up to normalization by the alarm clocks' rate.}
\begin{equation}
\label{eq:mass}
dx_{\alpha\beta}
	= x_{\alpha} \rho_{\alpha\beta} \dd t.
\end{equation}
The population shares $x_{\alpha}$ are then governed by the \emph{revision protocol dynamics}:
\begin{equation}
\label{eq:RPD}
%\tag{RPD}
\dot x_{\alpha}
%	= \insum_{\beta} \dot x_{\beta\alpha}
%	- \insum_{\beta} \dot x_{\alpha\beta}
	= \insum_{\beta} x_{\beta} \rho_{\beta\alpha}
	- x_{\alpha} \insum_{\beta} \rho_{\alpha\beta},
\end{equation}
with $\rho_{\alpha\alpha}$ defined arbitrarily.

In what follows, we will focus on revision protocols of the general ``imitative'' form
\begin{equation}
\label{eq:imitation}
\rho_{\alpha\beta}(\payv,x)
	= x_{\beta} r_{\alpha\beta}(\payv,x),
\end{equation}
corresponding to the case where a player imitates the strategy of a uniformly drawn opponent with probability proportional to the so-called \emph{conditional imitation rate} $r_{\alpha\beta}$ (assumed Lipschitz).
In particular, one of the most widely studied revision protocols of this type is the ``imitation of success'' protocol \citep{Wei95} where the imitation rate of a given target strategy is proportional to its payoff,%
\footnote{Modulo an additive constant which ensures that $\rho$ is positive but which cancels out when it comes to the dynamics.}
i.e.
\begin{equation}
\label{eq:success}
r_{\alpha\beta}(\payv,x)
	= \payv_{\beta}
%\rho_{\alpha\beta}(\payv,x)
%	= x_{\beta} \payv_{\beta}.
\end{equation}
On account of \eqref{eq:RPD}, the mean evolutionary dynamics induced by \eqref{eq:success} take the form:
\begin{equation}
\tag{RD}
\label{eq:RD}
\dot x_{\alpha}
	= x_{\alpha} \left[ \payv_{\alpha}(x) - \insum_{\beta} x_{\beta} \payv_{\beta}(x) \right],
\end{equation}
which is simply the classical replicator equation of \cite{TJ78}.%

The replicator dynamics have attracted significant interest in the literature and their long-run behavior is relatively well understood.
For instance, \cite{Aki80}, \cite{Nac90} and \cite{SZ92} showed that dominated strategies become extinct under \eqref{eq:RD},
whereas the (multi-population) ``folk theorem'' of evolutionary game theory \citep{HS03} states that
\begin{inparaenum}
[\itshape a\upshape)]
\item
(Lyapunov) stable states are Nash;
\item
limits of interior trajectories are Nash;
and
\item
strict Nash equilibria are asymptotically stable under \eqref{eq:RD}.
\end{inparaenum}

%----------------------------------------------------------------------
%%% DYNAMICS
%----------------------------------------------------------------------
\subsection{Payoff shocks and the induced dynamics}
\label{sec:dynamics}

Our main goal in this paper is to investigate the rationality properties of the replicator dynamics in a setting where the players' payoffs are subject to exogenous stochastic disturbances.
To model these ``payoff shocks'', we assume that the players' payoffs at time $t$ are of the form $\hat\payv_{\alpha}(t) = \payv_{\alpha}(x(t)) + \xi_{\alpha}(t)$ for some zero-mean ``white noise'' process $\xi_{\alpha}$.
Then, in Langevin notation, the replicator dynamics \eqref{eq:RD} become:
\begin{flalign}
\label{eq:SRD-Langevin}
\frac{dX_{\alpha}}{dt}
	&= X_{\alpha} \left[ \hat\payv_{\alpha} - \insum_{\beta} X_{\beta} \hat\payv_{\beta} \right]
	\notag\\
	&= X_{\alpha} \left[ \payv_{\alpha}(X) - \insum_{\beta} X_{\beta} \payv_{\beta}(X) \right]
	+ X_{\alpha} \left[ \xi_{\alpha} - \insum_{\beta} X_{\beta} \xi_{\beta} \right],
\end{flalign}
or, in \ac{SDE} form:
\begin{equation}
\label{eq:SRD}
\tag{SRD}
\begin{aligned}
dX_{\alpha}
	&= X_{\alpha} \left[ \payv_{\alpha}(X) - \insum_{\beta} X_{\beta} \payv_{\beta}(X) \right] dt
	\\
	&+ X_{\alpha} \left[ \sigma_{\alpha}(X) \dd W_{\alpha} - \insum_{\beta} X_{\beta} \sigma_{\beta}(X) \dd W_{\beta} \right],
\end{aligned}
\end{equation}
where the diffusion coefficients $\sigma_{\alpha}\from\strat\to\R$ (assumed Lipschitz) measure the intensity of the payoff shocks and the Wiener processes $W_{\alpha}$ are assumed independent.

The stochastic dynamics \eqref{eq:SRD} will constitute the main focus of this paper, so some remarks are in order:

\begin{remark}
With $\payv$ and $\sigma$ assumed Lipschitz, it follows that \eqref{eq:SRD} admits a unique (strong) solution $X(t)$ for every initial condition $X(0)\in\strat$.
Moreover, since the drift and diffusion terms of \eqref{eq:SRD} all vanish at the boundary $\bd(\strat)$ of $\strat$, standard arguments can be used to show that these solutions exist (a.s.) for all time, and that $X(t)\in\intstrat$ for all $t\geq0$ if $X(0)\in\intstrat$ \citep{Oks07,Kha12}.
\end{remark}

\begin{remark}
The independence assumption for the Wiener processes $W_{\alpha}$ can be relaxed without qualitatively affecting our analysis;%
\footnote{An important special case where it makes sense to consider correlated shocks is if the payoff functions $\payv_{\alpha}(x)$ are derived from random matchings in a finite game whose payoff matrix is subject to stochastic perturbations.
This specific disturbance model is discussed in Section \ref{sec:discussion}.}
in particular, as we shall see in the proofs of our results, 
%\YV{Where? No  need to answer (do not know what co(variation) means anyway), just so that you confirm that we indeed see that.}
%\PM{Is it ok now? Or would you prefer specific pointers? As you prefer! }
%\YV{Do not know what you changed, forget it}
the rationality properties of \eqref{eq:SRD} can be formulated directly in terms of the quadratic (co)variation of the noise processes $W_{\alpha}$.
Doing so however would complicate the relevant expressions considerably, so, for clarity, we will retain this independence assumption throughout our paper.
\end{remark}

\begin{remark}
The deterministic replicator dynamics \eqref{eq:RD} are also the governing dynamics for the  ``pairwise proportional imitation'' revision protocol \citep{Sch98} where a revising agent imitates the strategy of a randomly chosen opponent only if the opponent's payoff is higher than his own, and he does so with probability proportional to the payoff difference.
Formally, the conditional switch rate $\rho_{\alpha\beta}$ under this revision protocol is:
\begin{equation}
\label{eq:pairwise}
\rho_{\alpha\beta}
	= x_{\beta} \big[ \payv_{\beta} - \payv_{\alpha} \big]_{+},
\end{equation}
where $[x]_{+} = \max\{x,0\}$ denotes the positive part of $x$.
Accordingly, if the game's payoffs at time $t$ are of the perturbed form $\hat\payv_{\alpha}(t) = \payv_{\alpha}(x(t)) + \xi_{\alpha}(t)$ as before, \eqref{eq:RPD} leads to the master stochastic equation:
\begin{flalign}
\dot X_{\alpha}
	&= \insum_{\beta} X_{\beta} X_{\alpha} \big[ \hat\payv_{\alpha} - \hat\payv_{\beta} \big]_{+}
	- X_{\alpha} \insum_{\beta} X_{\beta} \big[ \hat\payv_{\beta} - \hat\payv_{\alpha} \big]_{+}
	\notag\\
	&= X_{\alpha} \insum_{\beta} X_{\beta} \left\{
	\big[ \hat\payv_{\alpha} - \hat\payv_{\beta} \big]_{+} - \big[ \hat\payv_{\beta} - \hat\payv_{\alpha} \big]_{+}
	\right\}
%	\notag\\
	= X_{\alpha} \insum_{\beta} X_{\beta} \left( \hat\payv_{\alpha} - \hat\payv_{\beta} \right)
	\notag\\
	&= X_{\alpha} \left[ \hat\payv_{\alpha} - \insum_{\beta} X_{\beta} \hat\payv_{\beta} \right],
\end{flalign}
which is simply the stochastic replicator dynamics \eqref{eq:SRD-Langevin}.
In other words, \eqref{eq:SRD} could also be interpreted as the mean dynamics of a pairwise imitation process with perturbed payoff comparisons as above.
\end{remark}

%----------------------------------------------------------------------
%%% RELATED WORK
%----------------------------------------------------------------------
\subsection{Related stochastic models}
\label{sec:related}

The replicator dynamics were first introduced in biology, as a model of frequency-dependent selection.
They arise from the geometric population growth equation:
\begin{equation}
\label{eq:Z}
\dot{z}_{\alpha} = z_{\alpha} \payv_{\alpha} 
\end{equation} where $z_{\alpha}$ denotes the absolute population size of the $\alpha$-th genotype of a given species.%
\footnote{The replicator equation \eqref{eq:RD} is obtained simply by computing the evolution of the frequencies $x_{\alpha}= z_{\alpha}/\sum_{\beta} z_{\beta}$ under \eqref{eq:Z}.}
This biological model was also the starting point of \cite{FH92} who added aggregate payoff shocks to \eqref{eq:Z} based on the geometric Brownian model:
\begin{equation}
\label{eq:Zsto}
dZ_{\alpha}
	= Z_{\alpha} \left[ \payv_{\alpha} \dd t + \sigma_{\alpha} \dd W_{\alpha} \right],
\end{equation}
where
%$Z_{\alpha}$ denotes the absolute population size of the $\alpha$-th genotype of a given species and 
the diffusion process $\sigma_{\alpha} \dd W_{\alpha}$ represents the impact of random, weather-like effects on the genotype's fitness (see also \citealp{Cab00,Imh05,HI09}).%
\footnote{\cite{KP06} also considered a related evolutionary model with Stratonovich-type perturbations while, more recently, \cite{Vla12} studied the effect of discontinuous semimartingale shocks incurred by catastrophic, earthquake-like events.}
Itô's lemma applied to the population shares $X_{\alpha} = Z_{\alpha}/\insum_{\beta} Z_{\beta}$ then yields the \emph{replicator dynamics with aggregate shocks:}
\begin{equation}
%\tag{ASRD}
\label{eq:ASRD}
\begin{aligned}
dX_{\alpha}
	&= X_{\alpha} \left[ \payv_{\alpha}(X) - \insum_{\beta} X_{\beta} \payv_{\beta}(X) \right] dt
	\\
	&+ X_{\alpha} \left[ \sigma_{\alpha} \dd W_{\alpha} - \insum_{\beta} \sigma_{\beta} X_{\beta} \dd W_{\beta} \right]
	\\
	&- X_{\alpha} \left[ \sigma_{\alpha}^{2} X_{\alpha} - \insum_{\beta} \sigma_{\beta}^{2} X_{\beta}^{2} \right] dt.
\end{aligned}
\end{equation}

In a repeated game context, the replicator dynamics also arise from a continuous-time variant of the exponential weight algorithm introduced by \cite{Vov90} and \cite{LW94} (see also \citealp{Sor09}).
In particular, if players follow the exponential learning scheme:
\begin{equation}
\begin{aligned}
dy_{\alpha}
	&= \payv_{\alpha}(x) \dd t,
	\\
x_{\alpha}
	&= \frac{\exp(y_{\alpha})}{\insum_{\beta} \exp(y_{\beta})},
\end{aligned}
\end{equation}
that is, if they play a logit best response to the vector of their cumulative payoffs, then the frequencies $x_{\alpha}$ follow \eqref{eq:RD}.\footnote{The intermediate variable $y_{\alpha}$ should be thought of as an evaluation of how good the strategy $\alpha$ is, and the formula for $x_{\alpha}$ as a way of transforming these evaluations into a strategy.}
Building on this, \cite{MM09,MM10} considered the stochastically perturbed exponential learning scheme:
\begin{equation}
\label{eq:SXL}
%\tag{SXL}
\begin{aligned}
dY_{\alpha}
	&= \payv_{\alpha}(X) \dd t + \sigma_{\alpha}(X) dW_{\alpha},
	\\
X_{\alpha}
	&= \frac{\exp(Y_{\alpha})}{\insum_{\beta} \exp(Y_{\beta})},
\end{aligned}
\end{equation}
where the cumulative payoffs are perturbed by the observation noise process $\sigma_{\alpha} dW_{\alpha}$.
By Itô's lemma, we then obtain the stochastic replicator dynamics of exponential learning:
\begin{equation}
%\tag{SXRD}
\label{eq:SXRD}
\begin{aligned}
dX_{\alpha}
	&= X_{\alpha} \left[ \payv_{\alpha}(X) - \insum_{\beta} X_{\beta} \payv_{\beta}(X) \right] dt
	\\
	&+ X_{\alpha} \left[ \sigma_{\alpha} \dd W_{\alpha} - \insum_{\beta} \sigma_{\beta} X_{\beta} \dd W_{\beta} \right]
	\\
	&+\frac{X_{\alpha}}{2} \left[
	\sigma_{\alpha}^{2} (1 - 2X_{\alpha}) - \insum_{\beta} \sigma_{\beta}^{2} X_{\beta}(1 - 2X_{\beta})
	\right] dt.
\end{aligned}
\end{equation}

Besides their very distinct origins, a key difference between the stochastic replicator dynamics \eqref{eq:SRD} and the stochastic models \eqref{eq:ASRD}/\eqref{eq:SXRD} is that there is no Itô correction term in the former.
%\footnote{Note that all these models lead to the same dynamical system in the noiseless, deterministic case.}
The reason for this is that in \eqref{eq:ASRD} and \eqref{eq:SXRD}, the noise affects primarily the evolution of an intermediary variable (the absolute population sizes $Z_{\alpha}$ and the players' cumulative payoffs $Y_{\alpha}$ respectively) before being carried over to the evolution of the strategy shares $X_{\alpha}$.
By contrast, the payoff shocks that impact the players' revision protocol in \eqref{eq:SRD} affect the corresponding strategy shares directly, so there is no intervening Itô correction.

%\subsubsection{The pure noise case}
%\label{sec:purenoise}

\paragraph{The pure noise case.}

To better understand the differences between our model and previous models of stochastic replicator dynamics, it is useful to consider the case of pure noise, that is, when the expected payoff of each strategy is equal to one and the same constant $C$:
\(
\payv_{\alpha}(x) = C
\)
for all $\alpha\in\act$ and for all $x\in\strat$.
%\begin{equation}
%\label{eq:purenoise}
%V_{\alpha}(x)= C, \forall \alpha \in \act, \forall x \in \strat
%\end{equation}

For simplicity, let us also assume that $\sigma_{\alpha}(x)$ is independent of the state of the population $x$.
Eq.~\eqref{eq:Zsto} then becomes a simple geometric Brownian motion of the form:
\begin{equation}
\label{eq:Zsto2}
dZ_{\alpha}
	= Z_{\alpha} \left[ C \dd t + \sigma_{\alpha} \dd W_{\alpha} \right],
\end{equation}
which readily yields $Z_{\alpha}(t) = Z_{\alpha}(0)\exp\left( (C - \sigma_{\alpha}^{2}/2) t + \sigma_{\alpha} W_{\alpha}(t) \right)$.
The corresponding frequency $X_{\alpha} = Z_{\alpha} / \insum_{\beta} Z_{\beta}$ will then be:
\begin{equation}
\label{eq:Xsto2}
X_{\alpha}(t)
	= 	\frac{X_{\alpha}(0) \exp\left(- \frac{1}{2}\sigma_{\alpha}^{2} t+ \sigma_{\alpha} W_{\alpha}(t)\right)}
	{\sum_{\beta} X_{\beta}(0) \exp\left(- \frac{1}{2}\sigma_{\beta}^{2} t+ \sigma_{\beta} W_{\beta}(t)\right)}.
\end{equation}
If $\sigma_{\alpha} \neq 0$, the law of large numbers yields $-\frac{1}{2} \sigma_{\alpha}^{2} t + \sigma_{\alpha} W_{\alpha}(t) \sim  -  \frac{1}{2} \sigma_{\alpha}^{2} t $ (a.s.).
Therefore, letting $\sigma_{\min}=\min_{\alpha \in \act} \sigma_{\alpha}$, it follows from \eqref{eq:Xsto2} that strategy $\alpha\in\act$ is eliminated if $\sigma_{\alpha}> \sigma_{\min}$ and survives if $\sigma_{\alpha}= \sigma_{\min}$ (a.s.).%
\footnote{Elimination is obvious;
for survival, simply add $\frac{1}{2}\sigma_{\min}^{2}t$ to the exponents of \eqref{eq:Xsto2} and recall that any Wiener process has $\limsup_{t} W(t) > 0$ and $\liminf_{t} W(t) <0$ (a.s.).}
In particular, if all intensities  are equal ($\sigma_{\alpha}= \sigma_{\min}$ for all $\alpha\in\act$), then all strategies survive and the share of each strategy oscillates for ever, occasionally taking values arbitrarily close to $0$ and arbitrarily close to $1$.
On the other hand, under the stochastic replicator dynamics of exponential learning for the pure noise case,
\eqref{eq:SXRD} readily yields:
%it follows from \eqref{eq:SXRD} that, in the pure noise case: 
\begin{equation}
X_{\alpha}(t)
	= \frac{X_{\alpha}(0)\exp(\sigma_{\alpha} W_{\alpha}(t))}{\insum_{\beta} X_{\beta}(0) \exp(\sigma_{\beta}W_{\beta}(t))}.
\end{equation}
Therefore, for any value of the diffusion coefficients $\sigma_{\alpha}$ (and, in particular, even if some strategies are affected by noise much more than others), all pure strategies survive. 

Our model behaves differently from both \eqref{eq:ASRD} and \eqref{eq:SXRD}:
in the pure noise case, for any value of the noise coefficients $\sigma_{\alpha}$ (as long as $\sigma_{\alpha}> 0$ for all $\alpha$), only a single strategy survives (a.s.), and strategy $\alpha$ survives with probability equal to $X_{\alpha}(0)$.
To see this, consider first the model with pure noise and only two strategies, $\alpha$ and $\beta$.
Then, letting $X(t) = X_{\alpha}(t)$ (so $X_{\beta}(t) = 1- X(t)$), we get:
\begin{equation}
\label{eq:pn2strat}
\dd X(t)
	= X(t) (1-X(t)) \left[ \sigma_{\alpha} \dd W_{\alpha} - \sigma_{\beta} \dd W_{\beta} \right]
	= X(t)(1-X(t))\, \sigma \dd {W}(t),
\end{equation}
where $\sigma^{2} = \sigma_{\alpha}^{2} + \sigma_{\beta}^{2}$ and we have used the time-change theorem for martingales to write $\sigma \dd W = \sigma_{\alpha} \dd W_{\alpha} - \sigma_{\beta} \dd W_{\beta}$ for some Wiener process $W(t)$.
This diffusion process can be seen as a continuous-time random walk on $[0, 1]$ with step sizes that get smaller as $X$ approaches $\{0,1\}$.
Thus, at a heuristic level, when $X(t)$ starts close to $X=1$ and takes one step to the left followed by one step to the right (or the opposite), the walk does not return to its initial position, but will approach $1$ (of course, the same phenomenon occurs near $0$).
This suggests that the process should eventually converge to one of the vertices:
indeed, letting $f(x) = \log x(1-x)$, Itô's lemma yields
\begin{equation}
df(X)
	%f'(X) dW + \frac{1){2} f"(t) (dX)^2
	= (1- 2X)\, \sigma \dd W - \frac{1}{2} \left[ (1-X)^{2} + X^{2} \right]\, \sigma^2 \dd t
	\leq (1- 2X) \sigma \dd W - \frac{1}{4} \sigma^{2} \dd t,
\end{equation}
so, by Lemma \ref{lem:Wbound}, we get $\lim_{t\to\infty} f(X(t)) = 0$ (a.s.), that is, $\lim_{t\to\infty} X(t) \in \{0,1\}$. 
%\YV{Why is this the law of large numbers and not Lemma \ref{lem:Wbound}? I mean, there is a $(1-2X)$ in front of  $\dd W$. Note that if without assuming $\sigma_{\alpha}$ and $\sigma_{\beta}$ constant we would get:  
%\[df(X)=(1-2X) [\sigma_{\alpha} dW_{\alpha} - \sigma_{\beta} dW_{\beta}] - \frac{1}{2} [ (1-X)^2 + X^2 ] (\sigma_{\alpha}^2 + \sigma_{\beta}^2) dt\]
%and if I understand correctly Lemma \ref{lem:Wbound} shows that $f(X(t))$ goes to $-\infty$ almost surely. I understand that with non constant noise I need to write this instead of getting a single Brownian, but I do not get what the added difficulty is given Lemma \ref{lem:Wbound}.}
%\PM{Fixed the LLN ref above.
%As for the rest, Lemma \ref{lem:Wbound} indeed gives you what you want, but writing $\sigma dW = \sigma_{\alpha} dW - \sigma_{\beta} dW_{\beta}$ is a bit dodgy:
%in the general case, you have $\int_{0}^{t} \sigma_{\alpha}(s) dW(s) - \int_{0}^{t} \sigma_{\beta}(s) dW_{\beta}(s) \dd s = W(\rho(t))$ where $d\rho = (\sigma_{\alpha}^{2} + \sigma_{\beta}^{2} ) \dd t$, and this is not exactly the same thing.
%To save time, let's leave things as they are, and we can change it later if the reviewers complain.}

More generally, consider the model with pure noise and $n$ strategies.
Then, computing $d[\log X_{\alpha} (1 - X_{\alpha})]$ as above, we readily obtain $\lim_{t \to \infty} X_{\alpha}(t) \in \{0, 1\}$ (a.s.), for every strategy $\alpha\in\act$ with $\sigma_{\alpha}>0$.
Since $X_{\alpha}$ is a martingale, we will have $\ex[X_{\alpha}(t)] = X_{\alpha}(0)$ for all $t\geq0$,%
\footnote{We are implicitly assuming here deterministic initial conditions, i.e. $X(0) = x$ (a.s.) for some $x\in\strat$.}
so $X_{\alpha} \to 1$ with probability $X_{\alpha}(0)$ and $X_{\alpha}(t) \to 0$ with probability $1- X_{\alpha}(0)$.%
\footnote{If several strategies are unaffected by noise, that is, are such that $\sigma_{\alpha}=0$, then their relative shares remain constant (that is, if $\alpha$ and $\beta$ are two such strategies, then $X_{\alpha}(t)/X_{\beta}(t) = X_{\alpha}(0)/X_{\beta}(0)$ for all $t\geq0$).
It follows from this observation and the above result that, almost surely, all these strategies are eliminated or all these strategies survive (and only them).}

The above highlights two important differences between our model and the stochastic replicator dynamics of \cite{FH92}.
First, in our model, noise is not detrimental in itself:
in the pure noise case, the expected frequency of a strategy remains constant, irrespective of the noise level;
by contrast, in the model of \cite{FH92}, the expected frequency of strategies affected by strong payoff noise decreases.%
\footnote{In the pure noise case of the model of \cite{FH92}, what remains constant is the expected number of individuals playing a strategy.
A crucial point here is that this number may grow to infinity.
What happens to strategies affected by large aggregate shocks is that with small probability, the total number of individuals playing this strategy gets huge, but with a large probability (going to 1), it gets small (at least compared to the number of individuals playing other strategies).
This can be seen as a gambler's ruin phenomenon, which explains that even with a higher expected payoff than others (hence a higher expected subpopulation size), the frequency of a strategy may go to zero almost surely (see e.g. \citealp[Sec~3.1.1]{RS11}). This cannot happen in our model since noise is added directly to the frequencies (which are bounded).}
Second, our model behaves in a somewhat more ``unpredictable'' way:
for instance, in the model of \cite{FH92}, when there are only two strategies with the same expected payoff, and if one of the strategies is affected by a stronger payoff noise, then it will be eliminated (a.s.);
in our model, we cannot say in advance whether it will be eliminated or not.
%\YV{Needs rewriting}
%\PM{Ok (I'm not sure I understand what this phrase means).}
%\YV{I am not sure which phrase you have in mind! ``Needs rewriting" means I am not happy with my writing, and plan to change it.}
%\PM{Haha, sorry, I meant the phrase above your todo note (that our model behaves in a more stochastic way).}
%\YV{I put the footnote in the main text to clarify what I have in mind, and changed ``more stochastic" into "more unpredictable". But I am not sure of this comment, so you can cut it if you want (but then rewrite the paragraph).}
%\PM{Did some very minor editing; apart from my question above, aok from my side.}

%----------------------------------------------------------------------
%%% ANALYSIS
%----------------------------------------------------------------------
\section{Long-term rationality analysis}
\label{sec:analysis}
%----------------------------------------------------------------------
%%% ANALYSIS
%----------------------------------------------------------------------
% !TEX root = ./ShockDynamics.tex

In this section, we investigate the long-run rationality properties of the stochastic dynamics \eqref{eq:SRD};
in particular, we focus on the elimination of dominated strategies and the stability of equilibrium play.

%----------------------------------------------------------------------
%%% DOMINATED STRATEGIES
%----------------------------------------------------------------------
\subsection{Elimination of dominated strategies}
\label{sec:dominated}

We begin with the elimination of dominated strategies.
Formally, given a trajectory of play $x(t) \in \strat$, we say that a pure strategy $\alpha\in\act$ \emph{becomes extinct along $x(t)$} if $x_{\alpha}(t) \to 0$ as $t\to\infty$.
More generally, following \cite{SZ92}, we will say that the mixed strategy $p\in\strat$ becomes extinct along $x(t)$ if $\min\{x_{\alpha}(t): \alpha\in\supp(p)\} \to 0$ as $t\to\infty$;
otherwise, we say that $p$ survives.

Now, with a fair degree of hindsight, it will be convenient to introduce a modified game $\modgame \equiv \modgame(\play,\act,\modpayv)$ with payoff functions $\modpayv_{\alpha}$ adjusted for noise as follows:
\begin{equation}
\label{eq:pay-mod}
\modpayv_{\alpha}(x)
	= \payv_{\alpha}(x)
	- \frac{1}{2} (1 - 2x_{\alpha})\,\sigma_{\alpha}^{2}(x).
\end{equation}
\cite{Imh05} introduced a similar modified game to study the long-term convergence and stability properties of the stochastic replicator dynamics with aggregate shocks \eqref{eq:ASRD} and showed that strategies that are dominated in this modified game are eliminated (a.s.) \textendash\ cf. Remark \ref{rem:modified} below.
Our main result concerning the elimination of dominated strategies under \eqref{eq:SRD} is of a similar nature:

\begin{theorem}
\label{thm:dominated}
Let $X(t)$ be an interior solution orbit of the stochastic replicator dynamics \eqref{eq:SRD}.
Assume further that $p\in\strat$ is dominated by $p'\in\strat$ in the modified game $\modgame$.
Then, $p$ becomes extinct along $X(t)$ \textup(a.s.\textup).
\end{theorem}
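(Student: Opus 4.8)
The plan is to run, in the stochastic setting, the classical cross-entropy argument for the elimination of dominated strategies (\cite{Aki80,SZ92}, and \cite{Imh05} for \eqref{eq:ASRD}), and to observe that the noise-adjusted payoffs $\modpayv_{\alpha}$ of \eqref{eq:pay-mod} are \emph{precisely} what the Itô correction produces --- which is the whole point of introducing $\modgame$. First I would apply Itô's lemma to $\log X_{\alpha}$ along an interior solution $X(t)$ of \eqref{eq:SRD}. The martingale part of $dX_{\alpha}/X_{\alpha}$ is $\sigma_{\alpha}(X)\dd W_{\alpha} - \insum_{\beta} X_{\beta}\sigma_{\beta}(X)\dd W_{\beta}$ and, since the $W_{\gamma}$ are independent, its infinitesimal quadratic variation equals $\big[\sigma_{\alpha}^{2}(X)(1-2X_{\alpha}) + \insum_{\gamma}\sigma_{\gamma}^{2}(X) X_{\gamma}^{2}\big]\dd t$; hence the $-\tfrac12$ Itô term turns $\payv_{\alpha}(X)$ into exactly $\payv_{\alpha}(X) - \tfrac12(1-2X_{\alpha})\sigma_{\alpha}^{2}(X) = \modpayv_{\alpha}(X)$, so that
\begin{equation*}
d\log X_{\alpha} = \Big[\modpayv_{\alpha}(X) - \insum_{\beta} X_{\beta}\payv_{\beta}(X) - \tfrac12\insum_{\gamma}\sigma_{\gamma}^{2}(X) X_{\gamma}^{2}\Big]\dd t + \Big[\sigma_{\alpha}(X)\dd W_{\alpha} - \insum_{\beta} X_{\beta}\sigma_{\beta}(X)\dd W_{\beta}\Big].
\end{equation*}

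Next I would introduce the ``relative cross-entropy'' $V(X) = \insum_{\alpha}(p'_{\alpha}-p_{\alpha})\log X_{\alpha}$ (a difference of two Kullback--Leibler divergences taken with respect to $X$). Because $\insum_{\alpha}p'_{\alpha} = \insum_{\alpha}p_{\alpha} = 1$, the $\alpha$-independent contributions $-\insum_{\beta}X_{\beta}\payv_{\beta}(X)$ and $-\tfrac12\insum_{\gamma}\sigma_{\gamma}^{2}(X)X_{\gamma}^{2}$ cancel in the $(p'-p)$-weighted sum, and since $\insum_{\alpha}(p'_{\alpha}-p_{\alpha}) = 0$ the weighted martingale collapses to $\insum_{\gamma}(p'_{\gamma}-p_{\gamma})\sigma_{\gamma}(X)\dd W_{\gamma}$. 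Thus
\begin{equation*}
dV(X) = \product{\modpayv(X)}{p'-p}\dd t + dN(t), \qquad N(t) = \insum_{\gamma}\int_{0}^{t}(p'_{\gamma}-p_{\gamma})\,\sigma_{\gamma}(X(s))\dd W_{\gamma}(s),
\end{equation*}
where $N$ is a square-integrable martingale whose quadratic variation $\langle N\rangle_{t} = \int_{0}^{t}\insum_{\gamma}(p'_{\gamma}-p_{\gamma})^{2}\sigma_{\gamma}^{2}(X(s))\dd s$ grows at most linearly, because each $\sigma_{\gamma}$ is continuous --- hence bounded --- on the compact set $\strat$.

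Now the domination hypothesis gives $\product{\modpayv(x)}{p'-p} > 0$ for every $x\in\strat$; by continuity of $\modpayv$ and compactness of $\strat$ there is a constant $m > 0$ with $\product{\modpayv(x)}{p'-p}\ge m$ on all of $\strat$. Integrating, $V(X(t)) \ge V(X(0)) + mt + N(t)$, and since $\langle N\rangle_{t} = O(t)$, Lemma \ref{lem:Wbound} yields $N(t)/t\to 0$ (a.s.), whence $V(X(t))\to +\infty$ (a.s.). Finally, since $X(t)\in\intstrat$ we have $\log X_{\alpha}(t) < 0$, so $\insum_{\alpha}p_{\alpha}\log X_{\alpha}(t) = \insum_{\alpha}p'_{\alpha}\log X_{\alpha}(t) - V(X(t)) \le -V(X(t))\to -\infty$; as the summands with $\alpha\in\supp(p)$ are nonpositive and carry weight $p_{\alpha} > 0$, this forces $\min\{X_{\alpha}(t):\alpha\in\supp(p)\}\to 0$, i.e. $p$ becomes extinct along $X(t)$.

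The one genuinely delicate step is the bookkeeping in the Itô computation: one must verify that, after the $(p'-p)$-weighting, the \emph{entire} Itô correction together with the population-average payoff and all $\sigma$-dependent terms reduces exactly to $\product{\modpayv(X)}{p'-p}$, everything else being $\alpha$-independent and hence annihilated --- this is precisely the calculation that dictates the form of $\modgame$ in \eqref{eq:pay-mod}. Everything afterwards is soft: uniform positivity of the drift is compactness, and the negligibility of the stochastic integral relative to $t$ is Lemma \ref{lem:Wbound}. (If the independence of the $W_{\gamma}$ is dropped, as remarked after \eqref{eq:SRD}, the only change is that $\langle N\rangle$ is expressed through the quadratic covariations of the $W_{\gamma}$, still bounded on $\strat$, so the argument goes through verbatim.)
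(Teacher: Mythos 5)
Your proposal is correct and follows essentially the same route as the paper's own proof: the same Itô computation for $\log X_{\alpha}$ (with the identical quadratic-variation term $\sigma_{\alpha}^{2}(1-2X_{\alpha})+\sum_{\gamma}\sigma_{\gamma}^{2}X_{\gamma}^{2}$ producing the modified payoffs), the same cross-entropy Lyapunov function $\sum_{\alpha}(p'_{\alpha}-p_{\alpha})\log X_{\alpha}$ (differing from the paper's $\dkl(p,x)-\dkl(p',x)$ only by an additive constant), and the same appeal to Lemma \ref{lem:Wbound} to show the martingale is dominated by the linear drift. Your closing deduction of extinction from $V\to\infty$ just makes explicit the ``standard argument'' the paper cites, so there is nothing to add.
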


\begin{remark}
As a special case, if the (pure) strategy $\alpha\in\act$ is dominated by the (pure) strategy $\beta\in\act$,
%and the diffusion coefficients $\sigma$ do not depend on the population state $x\in\strat$,
Theorem \ref{thm:dominated} shows that $\alpha$ becomes extinct under \eqref{eq:SRD} as long as
%\YV{Where do you use that the diffusion coefficients do not depend on the state? I think this is not needed here.}
%\PM{I'm using it to take the max in the differences of modified payoffs \eqref{eq:pay-mod}.}
%\YV{I do not get it. According to me this is not needed. Indeed, \eqref{eq:dom-cond-const} implies $\payv_{\beta}(x) - \payv_{\alpha}(x) > \frac{1}{2} \big( \sigma_{\alpha}^{2} + \sigma_{\beta}^{2} - x_{\beta}\sigma_{\beta}^2 - (1-x_{\alpha})\sigma_{\alpha}^2$ which after rearranging exactly means that  $\modpayv_{\beta}(x) > \modpayv_{\alpha}(x)$. Do you agree?}
%\PM{I agree if you throw in a $\max$ or include an ``$x$'' in the argument of the $\sigma$'s (see below).
%If you agree with my modification below, kill this remark.}
\begin{equation}
\label{eq:dom-cond-const}
\payv_{\beta}(x) - \payv_{\alpha}(x)
	> \frac{1}{2} \left[ \sigma_{\alpha}^{2}(x) + \sigma_{\beta}^{2}(x) \right]
	\quad
	\text{for all $x\in\strat$.}
\end{equation}
In terms of the original game, this condition can be interpreted as saying that $\alpha$ is dominated by $\beta$ by a margin no less that $\frac{1}{2}\max_{x} \big(\sigma_{\alpha}^{2}(x) + \sigma_{\beta}^{2}(x)\big)$.
Put differently, Theorem \ref{thm:dominated} shows that dominated strategies in the original, unmodified game become extinct provided that the payoff shocks are mild enough.
\end{remark}

%\begin{remark}
%In a single-population setting, if $\beta$ weakly dominates $\alpha$ and $\payv_{\beta}(e_{\alpha}) - \payv_{\alpha}(e_{\alpha}) = \delta >0$ then \payv_{\beta}(x) - \payv_{\alpha}(x)  \leq \delta x_{\alpha}$ for all  $x$, BUT THEN I HAVE A PROBLEM WHEN x_{\alpha}=0, I will only have (3.2) in the interior of the simplex, and this does not suffices a priori (loss of compacity)...  
%\end{remark}

\begin{proof}[Proof of Theorem \ref{thm:dominated}]
Following \cite{Cab00}, we will show that $p$ becomes extinct along $X(t)$ by studying the ``cross-entropy'' function:
\begin{equation}
\label{eq:V}
V(x)
	= \dkl(p,x) - \dkl(p',x)
	= \insum_{\alpha} \left(p_{\alpha} \log p_{\alpha} - p_{\alpha}' \log p_{\alpha}' \right)
	+ \insum_{\alpha} (p_{\alpha}' - p_{\alpha}) \log x_{\alpha},
\end{equation}
where $\dkl(p,x) = \insum_{\alpha} p_{\alpha} \log(p_{\alpha}/x_{\alpha})$ denotes the \ac{KL} divergence of $x$ with respect to $p$.
By a standard argument \citep{Wei95}, $p$ becomes extinct along $X(t)$ if $\lim_{t\to\infty} \dkl(p,X(t)) = \infty$;
thus, with $\dkl(p',x) \geq 0$, it suffices to show that $\lim_{t\to\infty} V(X(t)) = \infty$.

To that end, let  $Y_{\alpha} = \log X_{\alpha}$ so that
\begin{equation}
\label{eq:dY1}
dY_{\alpha}
	= \frac{dX_{\alpha}}{X_{\alpha}}
	- \frac{1}{2} \frac{1}{X_{\alpha}^{2}} \left(dX_{\alpha}\right)^{2},
\end{equation}
by Itô's lemma.
Then, writing
%$dS_{\alpha} = X_{\alpha} \insum_{\beta} X_{\beta} \zeta_{\beta\alpha} \dd W_{\beta\alpha}$ and
$dS_{\alpha} = X_{\alpha} \big[ \sigma_{\alpha} \dd W_{\alpha} - \insum_{\beta} X_{\beta} \sigma_{\beta} \dd W_{\beta} \big]$ for the martingale term of \eqref{eq:SRD}, we readily obtain:
%\begin{equation}
%\label{eq:dS}
%(dS_{\alpha})^{2}
%	= X_{\alpha}^{2} \insum_{\beta,\gamma} X_{\beta} X_{\gamma} \zeta_{\beta\alpha} \zeta_{\gamma\alpha} \dd W_{\beta\alpha}\cdot dW_{\gamma\alpha}
%	= X_{\alpha}^{2} \insum_{\beta} X_{\beta}^{2} \zeta_{\beta\alpha}^{2} \dd t,
%\end{equation}
%and, similarly:
\begin{flalign}
\label{eq:dT}
\left(dS_{\alpha}\right)^{2}
	&= X_{\alpha}^{2}\,
	\left[ \sigma_{\alpha} \dd W_{\alpha} - \insum_{\beta} X_{\beta} \sigma_{\beta} \dd W_{\beta} \right]
	\cdot \left[ \sigma_{\alpha} \dd W_{\alpha} - \insum_{\gamma} X_{\gamma} \sigma_{\gamma} \dd W_{\gamma} \right]
	\notag\\
	&= X_{\alpha}^{2}\,
	\left[ (1 - 2 X_{\alpha}) \sigma_{\alpha}^{2} + \insum_{\beta} \sigma_{\beta}^{2} X_{\beta}^{2}\right] dt,
\end{flalign}
where we have used the orthogonality conditions $dW_{\beta}\cdot dW_{\gamma} = \delta_{\beta\gamma} \dd t$.
By the same token, we also get $(dX_{\alpha})^{2} = (dS_{\alpha})^{2}$, and hence:
\begin{equation}
\begin{aligned}
\label{eq:dY2}
dY_{\alpha}
	&= \big( \payv_{\alpha} - \braket{\payv}{X} \big) \dd t
%	- \frac{1}{2} \insum_{\beta} X_{\beta}^{2} \zeta_{\beta\alpha}^{2} \dd t
	- \frac{1}{2}\left[ (1 - 2 X_{\alpha}) \sigma_{\alpha}^{2} + \insum_{\beta} \sigma_{\beta}^{2} X_{\beta}^{2}\right] dt
	\\
%	&+ \insum_{\beta} X_{\beta} \zeta_{\beta\alpha} \dd W_{\beta\alpha}
	&+ \sigma_{\alpha} \dd W_{\alpha} - \insum_{\beta} X_{\beta} \sigma_{\beta} \dd W_{\beta}.
\end{aligned}
\end{equation}
%Therefore, focusing on the difference $Y_{\alpha} - Y_{\beta}$, we get:
%\begin{equation}
%\label{eq:dY3}
%\begin{aligned}
%dY_{\alpha} - dY_{\beta}
%	&= \left[\payv_{\alpha} - \payv_{\beta}\right] dt
%	\\
%	& - \frac{1}{2} \insum_{\gamma} X_{\gamma}^{2} \left(\zeta_{\gamma\alpha}^{2} - \zeta_{\gamma\beta}^{2}\right) dt
%	- \frac{1}{2} \left[ (1-2X_{\alpha}) \sigma_{\alpha}^{2} - (1-2X_{\beta}) \sigma_{\beta}^{2} \right] dt
%	\\
%	&+ \insum_{\gamma} X_{\gamma} \left( \zeta_{\gamma\alpha} \dd W_{\gamma\alpha} - \zeta_{\gamma\beta} \dd W_{\beta}\right)
%	+ \sigma_{\alpha} \dd W_{\alpha} - \sigma_{\beta} \dd W_{\beta}.
%\end{aligned}
Therefore, after some easy algebra, we obtain:
\begin{flalign}
\label{eq:dV1}
dV
	&= \insum_{\alpha} \left( p_{\alpha}' - p_{\alpha} \right) dY_{\alpha}
	\notag\\
	&= \smallbraket{\payv(X)}{p' - p} \dd t
	\notag\\
	&- \frac{1}{2} \insum_{\alpha} \big( p_{\alpha}' - p_{\alpha} \big)
%	\left[
	(1 - 2 X_{\alpha})\,\sigma_{\alpha}^{2}(X) \dd t
%	+ \insum_{\beta} X_{\beta}^{2} \zeta_{\beta\alpha}^{2}(X)
%	\right] dt
%	\notag\\
	+ \insum_{\alpha} \left( p_{\alpha}' - p_{\alpha} \right) \sigma_{\alpha}(X) \dd W_{\alpha}
%	+ \insum_{\alpha} \left( p_{\alpha}' - p_{\alpha} \right) \insum_{\beta} X_{\beta} \zeta_{\beta\alpha} \dd W_{\beta\alpha}
	\notag\\
	&= \braket{\modpayv(X)}{p' - p} \dd t
	+ \insum_{\alpha} (p_{\alpha}' - p_{\alpha}) \sigma_{\alpha}(X) \dd W_{\alpha}
\end{flalign}
where we have used the fact that $\insum_{\alpha} \left( p_{\alpha}' - p_{\alpha} \right) = 0$.

Now, since $p$ is dominated by $p'$ in $\modgame$, we will have $\braket{\modpayv(x)}{p' - p} \geq m$ for some positive constant $m>0$ and for all $x\in\strat$.
Eq.~\eqref{eq:dV1} then yields:
\begin{equation}
\label{eq:dV2}
V(X(t))
	\geq V(X(0)) + mt + \xi(t),
\end{equation}
where $\xi$ denotes the martingale part of \eqref{eq:dV1}, viz.
\begin{equation}
%\begin{aligned}
\label{eq:xi}
%\(
\xi(t)
	= \insum_{\alpha} \left( p_{\alpha}' - p_{\alpha} \right) \int_{0}^{t}
%	\left[
%	\insum_{\beta} X_{\beta}(s) \zeta_{\beta\alpha}(X(s)) \dd W_{\beta\alpha}(s)
	\sigma_{\alpha}(X(s)) \dd W_{\alpha}(s).
%	\right]
%\)
%\end{aligned}
\end{equation}
Since the $\sigma(X(t))$ is bounded and continuous (a.s.), Lemma \ref{lem:Wbound} shows that $mt + \xi(t) \sim mt$ as $t\to\infty$, so the RHS of \eqref{eq:dV2} escapes to $\infty$ as $t\to\infty$. 
This implies $\lim_{t\to\infty} V(X(t)) = \infty$ and our proof is complete.
\end{proof}

Theorem \ref{thm:dominated} is our main result concerning the extinction of dominated strategies under \eqref{eq:SRD} so a few remarks are in order:

\begin{remark}
Theorem \ref{thm:dominated} is analogous to the elimination results of \citet[Theorem 3.1]{Imh05} and \citet[Prop.~1A]{Cab00} who show that dominated strategies become extinct under the replicator dynamics with aggregate shocks \eqref{eq:ASRD} if the shocks satisfy certain ``tameness'' requirements.
On the other hand, Theorem \ref{thm:dominated} should be contrasted to the corresponding results of \cite{MM10} who showed that dominated strategies become extinct under the stochastic replicator dynamics of exponential learning \eqref{eq:SXRD} \emph{irrespective} of the noise level (for a related elimination result, see also \citealp{BM14}).
The crucial qualitative difference here lies in the Itô correction term that appears in the drift of the stochastic replicator dynamics:
the Itô correction in \eqref{eq:SXRD} is ``just right'' with respect to the logarithmic variables $Y_{\alpha} = \log X_{\alpha}$ and this is what leads to the unconditional elimination of dominated strategies.
On the other hand, even though there is no additional drift term in \eqref{eq:SRD} except for the one driven by the game's payoffs, the logarithmic transformation $Y_{\alpha} = \log X_{\alpha}$ incurs an Itô correction which is reflected in the definition of the modified payoff functions \eqref{eq:pay-mod}.
\end{remark}

\begin{remark}
A standard induction argument based on the rounds of elimination of iteratively dominated strategies (see e.g. \citealp{Cab00} or \citealp{MM10}) can be used to show that the only strategies that survive under the stochastic replicator dynamics \eqref{eq:SRD} must be iteratively undominated in the modified game $\modgame$.
%In fact, if \eqref{eq:dom-cond} holds, then even weakly dominated strategies become extinct (though \eqref{eq:dom-cond} would then represent an asymmetry in mutation rates/payoff shocks favoring a weakly dominated strategy over its weakly dominant counterpart).
%\YV{First, do you mean that contrary: that is, that \eqref{eq:dom-cond} would represent an asymmetry favoring the weakly dominating strategy; second, weakly dominant means weakly dominating all other strategies, so you mean weakly dominating; third, anyway, I kind of agree with this statement (provided you mean the contrary) for several population dynamics - admittedly what we are studying - but less so in the case of single population dynamics}
%\PM{By ``weakly dominant'' I meant the strategy that ``weakly dominates'' the weakly dominated strategy in question.
%Again, no point in answering this comment at this point since we'll be using a different formulation once the modified game kicks in.}
%\YV{OK (of course I understood what you meant, but indeed, we'll see that later).} 
\end{remark}

\begin{remark}
Finally, it is worth mentioning that \cite{Imh05} also establishes an exponential rate of extinction of dominated strategies under the stochastic replicator dynamics with aggregate shocks \eqref{eq:ASRD}.
Specifically, if $\alpha\in\act$ is dominated, \cite{Imh05} showed that there exist constants $A,B>0$ and $A',B'>0$ such that
\begin{equation}
\label{eq:dom-decay}
X_{\alpha}(t)
	= o\left(\exp\left(-At + B\sqrt{t \log \log t}\right)\right)
	\quad
	\text{\textup(a.s.\textup)},
\end{equation}
and
\begin{equation}
\label{eq:dom-largedev}
\prob\left[ X_{\alpha}(t) > \eps \right]
	\leq \frac{1}{2} \mathrm{erfc}\left[ A' t^{1/2} + B' \log\eps \cdot t^{-1/2}\right] ,
\end{equation}
provided that the noise coefficients of \eqref{eq:ASRD} satisfy a certain ``tameness'' condition.
Following the same reasoning, it is possible to establish similar exponential decay rates for the elimination of dominated strategies under \eqref{eq:SRD}, but the exact expressions for the constants in \eqref{eq:dom-decay} and \eqref{eq:dom-largedev} are more complicated, so we do not present them here.
\end{remark}

%----------------------------------------------------------------------
%%% EQUILIBRIUM PLAY
%----------------------------------------------------------------------
\subsection{Stability analysis of equilibrium play}
\label{sec:stability}

In this section, our goal will be to investigate the stability and convergence properties of the stochastic replicator dynamics \eqref{eq:SRD} with respect to equilibrium play.
Motivated by a collection of stability results that is sometimes called the ``folk theorem'' of evolutionary game theory \citep{HS03}, we will focus on the following three properties of the deterministic replicator dynamics \eqref{eq:RD}:
\begin{enumerate}
\item
Limits of interior orbits are Nash equilibria.
\item
Lyapunov stable states are Nash equilibria.
\item
Strict Nash equilibria are asymptotically stable under \eqref{eq:RD}.
\end{enumerate}

Of course, given the stochastic character of the dynamics \eqref{eq:SRD}, the notions of Lyapunov and asymptotic stability must be suitably modified.
In this \ac{SDE} context, we have:

\begin{definition}
\label{def:stability}
Let $\eq\in\strat$.
We will say that:
\begin{enumerate}
\item
$\eq$ is \emph{stochastically Lyapunov stable} under \eqref{eq:SRD} if, for every $\eps>0$ and for every neighborhood $U_{0}$ of $\eq$ in $\strat$, there exists a neighborhood $U\subseteq U_{0}$ of $\eq$ such that
\begin{equation}
\label{eq:stable-Lyap}
\prob(\text{$X(t)\in U_{0}$ for all $t\geq0$})
	\geq 1-\eps
	\quad
	\text{whenever $X(0) \in U$}.
\end{equation}

\item
$\eq$  is \emph{stochastically asymptotically stable} under \eqref{eq:SRD} if it is stochastically stable and attracting:
for every $\eps>0$ and for every neighborhood $U_{0}$ of $\eq$ in $\strat$, there exists a neighborhood $U\subseteq U_{0}$ of $\eq$ such that
\begin{equation}
\label{eq:stable-asym}
\txs
\prob\left(
	\text{$X(t)\in U_{0}$ for all $t\geq0$ and $\dis\lim_{t\to\infty} X(t) = \eq$}
	\right)
	\geq 1-\eps
	\quad
	\text{whenever $X(0) \in U$}.
\end{equation}
\end{enumerate}
\end{definition}

%In the evolutionary setting of \eqref{eq:ASRD}, \cite{Imh05} and \cite{HI09} showed that strict Nash equilibria are stochastically asymptotically stable provided that the variability of the shocks across different strategies is small enough.
%More recently, in a learning context, \cite{MM10} showed that the same holds for the stochastic replicator dynamics \eqref{eq:SRD} of exponential learning (with constant $\temp$), irrespective of the variance of the observation noise.

%Motivated by the sufficient condition \eqref{eq:dom-cond} of Theorem \ref{thm:dominated} (and armed with a fair degree of hindsight), it will also be convenient to introduce the noise-adjusted game $\modgame \equiv \modgame(\play,\act,\modpayv)$ with modified payoff functions:
%\begin{equation}
%\label{eq:pay-mod}
%\modpayv_{\alpha}(x)
%	= \payv_{\alpha}(x)
%	- \frac{1}{2} \left[
%	(1 - 2x_{\alpha}) \sigma_{\alpha}^{2}(x)
%	+ \insum_{\gamma}^{k} x_{\gamma}^{2} \big(\zeta_{\gamma\alpha}^{k}(x)\big)^{2}
%	\right].
%\end{equation}
%\cite{HI09} introduced a similar modified game to study the long-term convergence and stability properties of the stochastic replicator dynamics with aggregate shocks \eqref{eq:ASRD} in single-population random matching games \textendash\ cf. Remark \ref{rem:modified} below.

For \eqref{eq:SRD}, we have:

\begin{theorem}
\label{thm:folk}
Let $X(t)$ be an interior solution orbit of the stochastic replicator dynamics \eqref{eq:SRD} and let $\eq\in\strat$.
\begin{enumerate}
[\indent\upshape(1)]
\item
If $\prob\left(\lim_{t\to\infty} X(t) = \eq\right) > 0$, then $\eq$ is a Nash equilibrium of the noise-adjusted game $\modgame$.
\item
If $\eq$ is stochastically Lyapunov stable, then it is also a Nash equilibrium of the noise-adjusted game $\modgame$.
\item
If $\eq$ is a strict Nash equilibrium of the noise-adjusted game $\modgame$, then it is stochastically asymptotically stable under \eqref{eq:SRD}.
\end{enumerate}
\end{theorem}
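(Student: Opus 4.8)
My plan is to reduce all three parts to the logarithmic form of \eqref{eq:SRD} obtained in the proof of Theorem~\ref{thm:dominated}. Writing $Y_{\alpha}=\log X_{\alpha}$, equation~\eqref{eq:dY2} shows that for any two strategies $\alpha,\beta\in\act$ the log-ratio $Z_{\alpha\beta}=Y_{\alpha}-Y_{\beta}=\log(X_{\alpha}/X_{\beta})$ satisfies, after the common drift and diffusion terms cancel,
\[
dZ_{\alpha\beta}=\bigl[\modpayv_{\alpha}(X)-\modpayv_{\beta}(X)\bigr]\,dt+\sigma_{\alpha}(X)\,dW_{\alpha}-\sigma_{\beta}(X)\,dW_{\beta},
\]
so the drift of \emph{every} such ratio is governed by the noise-adjusted game $\modgame$, while its martingale part has quadratic variation rate $\sigma_{\alpha}^{2}(X)+\sigma_{\beta}^{2}(X)$, bounded on $\strat$. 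For Parts~(1) and~(2) I would argue by contradiction: if $\eq$ is not a Nash equilibrium of $\modgame$, there are $\alpha\in\supp(\eq)$ and $\beta\in\act$ with $\modpayv_{\beta}(\eq)>\modpayv_{\alpha}(\eq)$, hence a neighbourhood $V\ni\eq$ and $c>0$ with $\modpayv_{\beta}-\modpayv_{\alpha}\ge c$ on $V$. In Part~(1) the event $\{X(t)\to\eq\}$ has positive probability and on it $X(t)$ eventually stays in $V$; in Part~(2), stochastic Lyapunov stability lets us pick an interior initial condition arbitrarily close to $\eq$ from which $X(t)$ stays in a prescribed small neighbourhood $U_{0}\subseteq V$ of $\eq$ forever with probability at least $\tfrac12$. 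On the corresponding positive-probability event the drift of $Z_{\beta\alpha}$ is eventually at least $c$, whereas Lemma~\ref{lem:Wbound} forces its martingale part to be $o(t)$, so $Z_{\beta\alpha}(t)=\log(X_{\beta}(t)/X_{\alpha}(t))\to+\infty$; but there $X_{\alpha}(t)$ is bounded away from $0$ (by $\eq_{\alpha}/2>0$, as $\alpha\in\supp(\eq)$) while $X_{\beta}(t)\le1$, so $Z_{\beta\alpha}$ stays bounded above --- a contradiction.

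Part~(3) is the substantive statement, and its heart is choosing a Lyapunov function whose Itô correction reproduces the \emph{modified} payoffs. The naive candidates --- the relative entropy $-\log x_{\alpha_{0}}$, or $1-x_{\alpha_{0}}$ --- have generators whose leading behaviour near $\eq$ involves only the original differences $\payv_{\gamma}(\eq)-\payv_{\alpha_{0}}(\eq)$, so they detect strict equilibria of $\game$, not of $\modgame$. Since a strict equilibrium is pure, write $\eq=\bvec_{\alpha_{0}}$ and set $c_{\gamma}=\modpayv_{\alpha_{0}}(\eq)-\modpayv_{\gamma}(\eq)>0$ for $\gamma\ne\alpha_{0}$; fix $q\in(0,q_{0})$, where $q_{0}=\min_{\gamma\ne\alpha_{0}}2c_{\gamma}\big/\bigl(\sigma_{\alpha_{0}}^{2}(\eq)+\sigma_{\gamma}^{2}(\eq)\bigr)$ (with $q_{0}=+\infty$ if the denominator vanishes), and take
\[
h(x)=\insum_{\gamma\ne\alpha_{0}}\Bigl(\frac{x_{\gamma}}{x_{\alpha_{0}}}\Bigr)^{q},
\]
which is $C^{2}$ on $\{x\in\strat:x_{\alpha_{0}}>0\}$, vanishes only at $\eq$, and tends to $+\infty$ as $x_{\alpha_{0}}\to0$. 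Applying Itô's lemma to $e^{-qZ_{\alpha_{0}\gamma}}$ through the SDE for $Z_{\alpha_{0}\gamma}$ above, one obtains
\[
\mathcal{L}h(x)=q\insum_{\gamma\ne\alpha_{0}}\Bigl(\frac{x_{\gamma}}{x_{\alpha_{0}}}\Bigr)^{q}\Bigl[\modpayv_{\gamma}(x)-\modpayv_{\alpha_{0}}(x)+\tfrac{q}{2}\bigl(\sigma_{\alpha_{0}}^{2}(x)+\sigma_{\gamma}^{2}(x)\bigr)\Bigr],
\]
and at $\eq$ each bracket equals $-c_{\gamma}+\tfrac{q}{2}\bigl(\sigma_{\alpha_{0}}^{2}(\eq)+\sigma_{\gamma}^{2}(\eq)\bigr)<0$ by the choice of $q$; hence, by continuity, there are a neighbourhood $V$ of $\eq$ and $\kappa>0$ with $\mathcal{L}h\le-q\kappa\,h\le0$ on $V$. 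This is exactly where $\modgame$ enters: unlike $d(\log X_{\alpha_{0}})$, whose quadratic variation vanishes at $\eq$, the ratios $x_{\gamma}/x_{\alpha_{0}}$ feed the full intensity $\sigma_{\alpha_{0}}^{2}+\sigma_{\gamma}^{2}$ into the Itô term, which is precisely the adjustment turning $\payv$ into $\modpayv$.

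With $h$ in hand, Part~(3) follows from a standard stochastic Lyapunov argument. Given $\eps>0$ and a neighbourhood $U_{0}$ of $\eq$, pick $\rho_{0}$ so small that the sublevel set $D_{\rho_{0}}=\{h<\rho_{0}\}$ has compact closure inside $V\cap U_{0}$; stopping at $\tau=\inf\{t:X(t)\notin D_{\rho_{0}}\}$, the bounded nonnegative supermartingale $h(X(t\wedge\tau))$ yields $\prob(\tau<\infty)\le h(X(0))/\rho_{0}$, so from $D_{\eps\rho_{0}}$ the orbit remains in $U_{0}$ for all time with probability at least $1-\eps$. On $\{\tau=\infty\}$ the sharper bound $\mathcal{L}h\le-q\kappa h$ makes $e^{q\kappa t}h(X(t))$ a nonnegative supermartingale, hence almost surely convergent to a finite limit, so $h(X(t))\to0$ and therefore $X(t)\to\eq$ on that event, giving stochastic asymptotic stability. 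I expect the genuinely delicate point to be the one highlighted above --- pinning down a Lyapunov function whose generator reflects $\modgame$ rather than $\game$; once $h$ is fixed, the remaining steps are routine.
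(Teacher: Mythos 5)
Your argument is correct, and Parts (1) and (2) coincide with the paper's proof: both reduce to the SDE for the log-ratios $Y_{\alpha}-Y_{\beta}$, whose drift is exactly $\modpayv_{\alpha}-\modpayv_{\beta}$, and both derive a contradiction from Lemma \ref{lem:Wbound}. For Part (3), however, you take a genuinely different route. The paper deliberately avoids the stochastic Lyapunov method: it works directly with $Z_{\alpha}=Y_{\alpha}-Y_{\alpha^{\ast}}$, bounds $Z_{\alpha}(t)\leq -2M-mt-\xi(t)$ up to the exit time, and controls the escape probability by combining the time-change theorem with the explicit hitting-time computation of Lemma \ref{lem:hitprob} (yielding the bound $e^{-2Mm/K}$), after which Lemma \ref{lem:Wbound} gives $Z_{\alpha}(t)\sim -mt\to-\infty$. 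You instead construct the generator-based Lyapunov function $h(x)=\sum_{\gamma\neq\alpha_{0}}(x_{\gamma}/x_{\alpha_{0}})^{q}$ with $q$ small, verify $\mathcal{L}h\leq -q\kappa h$ near $\eq$, and run the standard supermartingale/exit-probability argument — essentially the method of \cite{HI09} that the authors explicitly set aside, adapted to the present noise structure. Your computation checks out: the quadratic variation of $Y_{\alpha_{0}}-Y_{\gamma}$ is $(\sigma_{\alpha_{0}}^{2}+\sigma_{\gamma}^{2})\,dt$, so the Itô term of $(x_{\gamma}/x_{\alpha_{0}})^{q}$ contributes $\tfrac{q}{2}(\sigma_{\alpha_{0}}^{2}+\sigma_{\gamma}^{2})$, which is dominated by $c_{\gamma}=\modpayv_{\alpha_{0}}(\eq)-\modpayv_{\gamma}(\eq)>0$ for $q<q_{0}$; and your observation that the naive candidate $-\log x_{\alpha_{0}}$ only detects strict equilibria of $\game$ (its Itô correction vanishes to first order at the vertex) is exactly the reason the exponent $q$ is needed and correctly isolates where $\modgame$ enters. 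What each approach buys: the paper's method needs only the two elementary lemmas of the appendix and gives a transparent a.s. linear decay rate for $\log X_{\alpha}$ together with an explicit exponential bound on the escape probability; yours requires the careful calibration of $q$ but produces a bona fide local Lyapunov function for \eqref{eq:SRD}, makes the appearance of the threshold $\tfrac{1}{2}(\sigma_{\alpha_{0}}^{2}+\sigma_{\gamma}^{2})$ in \eqref{eq:strict-mod} completely mechanical, and yields exponential decay of $h(X(t))$ at rate $q\kappa$ on the non-exit event. Both are complete proofs of the statement.
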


\begin{remark}
By the nature of the modified payoff functions \eqref{eq:pay-mod}, strict equilibria of the original game $\game$ are also strict equilibria of $\modgame$, so Theorem \ref{thm:folk} implies that strict equilibria of $\game$ are also stochastically asymptotically stable under the stochastic dynamics \eqref{eq:SRD}. 
The converse does not hold: if the noise coefficients $\sigma_{\alpha}$ are sufficiently large, \eqref{eq:SRD} possesses stochastically asymptotically stable states that are not Nash equilibria of $\game$.
This is consistent with the behavior of \eqref{eq:SRD} in the pure noise case that we discussed in the previous section:
if $X(t)$ starts within $\eps$ of a vertex of $\strat$ and there are no payoff differences, then $X(t)$ converges to this vertex with probability at least $1-\eps$. 
%\YV{As far as I understand, the way I presented the heuristic argument in the pure noise section assumes implicitly constant noise (due to your comment on remark 3.6 below), but I think this is not really needed. I guess I should recheck.}
%\PM{The problem lies in how you are replacing $d\xi = \sigma_{\alpha}(X) \dd W_{\alpha} - \sigma_{\beta}(X) \dd W_{\beta}$ by the Brownian martingale $\sigma(X) \dd W$.
%The time-change theorem for martingales only tells you that there exists a Brownian motion such that $\xi = W(\rho(t))$ where $d\rho = d[\xi,\xi] = \left[ \sigma_{\alpha}^{2}(X(t)) + \sigma_{\beta}^{2}(X(t)) \right]\dd t$ is the quadratic covariation of $\xi$.
%If everything is constant, you can take $\sigma^{2}$ out of the argument of $W(\sigma^{2}t)$ by the rescaling property of Brownian motion \textendash\ but, if not, things are less clear.
%Actually, this is why we need Lemma \ref{lem:Wbound} in the proof of Theorem \ref{thm:dominated}.}
%\YV{I understand this, but I think that writing everything without merging the Brownian terms leads to the same results (at least in the pure noise case): see my comment in the pure noise section.}
%\PM{I agree, I'm only saying that the equality $\sigma \dd W = \dots$ might need to be discussed a bit more.
%This is a trivial point though: let's keep non-constant noise, it's fine (and if the reviewer complains, we include some clarifications).}
\end{remark}

%\begin{remark}
%In the pure noise case, we found that if the process starts at distance $\eps$ from a vertex, then it converges to it with probability $1-\eps$.
%This suggests that: a) in the pure noise case, all vertices are stochastically asymptotically stable ;  b) if there are only small payoff differences, then if we start close to a vertex, we will still converge to it with large probability;  c) irrespective of the size of the payoff differences, strict equilibria of the original game should be stochastically asymptotically stable. This is indeed what we found. 
%\end{remark}
%\YV{I removed two things: the allusion to small payoff differences, because then the probability of convergence to the vertex is not clear,  and the fact that this proves stochastic stability, because I did not prove Lyapunov stability.} 
%\PM{Ok.}

\begin{remark}
The condition for $\alpha$ to be a strict equilibrium of the modified game is that 
\begin{equation}
\label{eq:strict-mod}
\payv_{\beta} - \payv_{\alpha}
	< \frac{1}{2} \left(\sigma_{\alpha}^{2} + \sigma_{\beta}^{2} \right)
	\quad
	\text{for all $\beta\neq\alpha$,}
\end{equation}
where the payoffs and the noise coefficients are evaluated at the vertex $\bvec_{\alpha}$ of $\strat$ (note the similarity with \eqref{eq:dom-cond-const}).
To provide some intuition for this condition, consider the case of only two pure strategies, 
$\alpha$ and $\beta$, and assume constant noise coefficients. 
Letting $X(t)=X_{\beta}(t)$ and proceeding as in \eqref{eq:pn2strat}, we get $dX= X(1-X) \left[ (\payv_{\beta} - \payv_{\alpha}) \dd t - \sigma \dd W \right]$ where $\sigma^2= \sigma_{\alpha}^2 + \sigma_{\beta}^2$ and $W$ is a rescaled Wiener process.
%\PM{This is only true in the constant $\sigma$ case; in the general case you need the time-change theorem for martingales, and the (non-constant) $\sigma$ coefficient remains in the argument of $W$.}
%\YV{Constant = do not depend on the state right ? Does not mean ``are equal" ?}
%\PM{Sorry, yes: by ``constant'' I mean ``constant in $x$'', i.e. ``state-independent''.}
Heuristically, a discrete-time counterpart of $X(t)$ is then provided by the random walk: 
\begin{equation}
X(n+1) - X(n)
	= X(n)(1-X(n)) \left[ (\payv_{\beta} - \payv_{\alpha}) \delta + \sigma \xi_{n} \sqrt{\delta} \right]
\end{equation}
where $\xi_{n} \in \{+1, -1\}$ is a zero-mean Bernoulli process, and the noise term is multiplied by $\sqrt{\delta}$ instead of $\delta$ because $dW\cdot \dd W = dt$.
%\PM{Changed ``ordinary'' to ``zero-mean'' to make sure everything is clear.}
For small $X$ and $\delta$, a simple computation then shows that, in the event $\xi_{n+1} = - \xi_{n}$, we have:
%\PM{I guess that below you mean ``$\approx$'' (approximate equality), not ``$\sim$'' (asymptotic equality), right?}
%\YV{No, I mean equivalence (what you call asymptotic equality), that is, the ratio goes to 1 when $X$ and  $\delta$ go to zero}
%\PM{I wrote it as $o(\delta)$ to make sure there is no confusion; feel free to change things back though if you prefer!}
%\YV{I added a $o(X(n))$ since I need both $X$ and $\delta$ small.}
\begin{equation}
X(n+2) - X(n)
	= 2 \delta X(n) \left[ \payv_{\beta} - \payv_{\alpha} - \tfrac{1}{2}\sigma^{2} \right]
	+ o(\delta) + o(X(n)).
\end{equation} 
%where $v_{\beta} - v_{\alpha}$ and $\sigma$ are evaluated at $X= 0$, that is, $X_{\alpha}= 1$. 
Since $\sigma^2= \sigma_{\alpha}^2 + \sigma_{\beta}^2$, the bracket is negative (so $X_{\alpha}= 1- X$ increases) if and only if condition \eqref{eq:strict-mod} is satisfied.
Thus, \eqref{eq:strict-mod} may be interpreted as saying that when the discrete-time process $X(n)$ is close to $\bvec_{\alpha}$ and the random noise term $\xi_{n}$ takes two successive steps in opposite direction, then the process ends up even closer to $\bvec_{\alpha}$.%
\footnote{Put differently, it's more probable for $X(n)$ to decrease rather than increase:
$X(n+2) > X(n)$ with probability $1/4$ (i.e. if and only if $\xi_{n}$ takes two positive steps), while $X(n+2) < X(n)$ with probability $3/4$.}
%\YV{Maybe, do not know. My idea was that having roughly as many step sizes leftwards than rightwards is what happens after a long time.}
%\PM{I put it as a footnote, include or kill at your discretion.}
%\YV{Fine, let's keep the footnote for now.}
On the other hand, if the opposite strict inequality holds, then this interpretation suggests that $\beta$ should successfully invade a population where most individuals play $\alpha$ \textendash\ which, in turn, explains \eqref{eq:dom-cond-const}.
\end{remark}

%Contrary to the approach of \cite{Imh05}, \cite{HI09} and \cite{MM10}, our proof does not rely on the stochastic Lyapunov method \citep{Kha12} and the infinetesimal generator of \eqref{eq:evolution}.
%Instead, our approach is inspired by the recent results of \needref

\begin{proof}[Proof of Theorem \ref{thm:folk}]
Contrary to the approach of \cite{HI09}, we will not employ the stochastic Lyapunov method \citep[see e.g.][]{Kha12} which requires calculating the infinitesimal generator of \eqref{eq:SRD}.
Instead, motivated by the recent analysis of \cite{BM14}, our proof will rely on the ``dual'' variables $Y_{\alpha} = \log X_{\alpha}$ that were already used in the proof of Theorem \ref{thm:dominated}.

\paragraph{Part 1.}
We argue by contradiction.
Indeed, assume that $\prob\left(\lim_{t\to\infty} X(t) = \eq\right) > 0$ but that $\eq$ is not Nash for the noise-adjusted game $\modgame$, so $\modpayv_{\alpha}(\eq) < \modpayv_{\beta}(\eq)$ for some $\alpha\in\supp(\eq)$, $\beta\in\act$. 
On that account, let $U$ be a sufficiently small neighborhood of $\eq$ in $\strat$ such that $\modpayv_{\beta}(x) - \modpayv_{\alpha}(x) \geq m$ for some $m > 0$ and for all $x\in U$.
Then, by \eqref{eq:dY2}, we get:
\begin{equation}
\label{eq:Ydiff1}
\begin{aligned}
dY_{\alpha} - dY_{\beta}
	&= \left[\payv_{\alpha} - \payv_{\beta}\right] dt
%	\\
%	&- \frac{1}{2} \insum_{\gamma} X_{\gamma}^{2} \left(\zeta_{\gamma\alpha}^{2} - \zeta_{\gamma\beta}^{2}\right) dt
	- \frac{1}{2} \left[ (1-2X_{\alpha}) \sigma_{\alpha}^{2} - (1-2X_{\beta}) \sigma_{\beta}^{2} \right] dt
	\\
	&+ \sigma_{\alpha} \dd W_{\alpha} - \sigma_{\beta} \dd W_{\beta},
%	+ \insum_{\gamma} X_{\gamma} \left( \zeta_{\gamma\alpha} \dd W_{\gamma\alpha} - \zeta_{\gamma\beta} \dd W_{\beta}\right)
\end{aligned}
\end{equation}
so, if $X(t)$ is an interior orbit of \eqref{eq:SRD} that converges to $\eq$, we will have:
\begin{equation}
\label{eq:Ydiff2}
dY_{\alpha} - dY_{\beta}
	\leq - m \dd t - d\xi
	\quad
	\text{for all large enough $t>0$},
\end{equation}
where $\xi$ denoting the martingale part of \eqref{eq:Ydiff1}.
Since the diffusion coefficients of \eqref{eq:Ydiff1} are bounded, Lemma \ref{lem:Wbound} shows that $mt + \xi(t) \sim mt$ for large $t$ (a.s.), so
\begin{equation}
\log\frac{X_{\alpha}(t)}{X_{\beta}(t)}
	\leq \log\frac{X_{\alpha}(0)}{X_{\beta}(0)} - mt - \xi(t)
	\sim - mt
	\to -\infty
	\quad
	\text{(a.s.)}
\end{equation}
as $t\to\infty$. 
This implies that $\lim_{t\to\infty} X_{\alpha}(t) = 0$, contradicting our original assumption that $X(t)$ stays in a small enough neighborhood of $\eq$ with positive probability (recall that $\eq_{\alpha}>0$);
we thus conclude that $\eq$ is a Nash equilibrium of the noise-adjusted game $\modgame$, as claimed.

\paragraph{Part 2.}
Assume that $\eq$ is stochastically Lyapunov stable.
Then, every neighborhood $U$ of $\eq$ admits an interior trajectory $X(t)$ that stays in $U$ for all time with positive probability.
The proof of Part 1 shows that this only possible if $\eq$ is a Nash equilibrium of the modified $\modgame$, so our claim follows.

\paragraph{Part 3.}
To show that strict Nash equilibria of $\modgame$ are stochastically asymptotically stable, let $\eq = (\alpha_{1}^{\ast},\dotsc, \alpha_{N}^{\ast}) \in\strat$ be a strict equilibrium of $\modgame$.
Then, suppressing the population index $k$ as before, let
\begin{equation}
\label{eq:Zdef}
Z_{\alpha}
	= Y_{\alpha} - Y_{\alpha^{\ast}},
\end{equation}
so that $X(t)\to\eq$ if and only if $Z_{\alpha}(t) \to -\infty$ for all $\alpha\in\act^{\ast} \equiv \act\exclude{\alpha^{\ast}}$.%
\footnote{Simply note that $X_{\alpha^{\ast}} = \big(1 + \sum_{\beta\in\act^{\ast}} \exp(Z_{\beta})\big)^{-1}$.}

To proceed, fix some probability threshold $\eps>0$ and a neighborhood $U_{0}$ of $\eq$ in $\strat$. 
Since $\eq$ is a strict equilibrium of $\modgame$, there exists a neighborhood $U \subseteq U_{0}$ of $\eq$ and some $m>0$ such that
\begin{equation}
\modpayv_{\alpha^{\ast}}(x) - \modpayv_{\alpha}(x)
	\geq m
	\quad
	\text{for all $x\in U$ and for all $\alpha\in\act^{\ast}$.}
\end{equation}
Let $M>0$ be sufficiently large so that $X(t) \in U$ if $Z_{\alpha}(t) \leq -M$ for all $\alpha\in\act^{\ast}$;
we will show that if $M$ is chosen suitably (in terms of $\eps$) and $Z_{\alpha}(0) < -2M$, then $X(t) \in U$ for all $t\geq0$ and $Z_{\alpha}(t) \to -\infty$ with probability at least $1-\eps$, i.e. $\eq$ is stochastically asymptotically stable.
%furthermore, with a fair degree of hindsight, assume also that
%\begin{equation}
%M >
%	- m^{-1}\temp_{\max} \zeta_{\max}^{2} \log(\eps/A^{\ast}),
%\end{equation}
%where $\temp_{\max} = \max\temp(0)$, $\zeta_{\max}^{2} = \max_{,\alpha,x} \zeta_{\alpha}^{2}(x)$ and $A^{\ast} = \sum \smallabs{\act^{\ast}}$.

To that end, take $Z_{\alpha}(0) \leq -2M$ in \eqref{eq:Zdef} and define the first exit time:
\begin{equation}
\label{eq:hitU}
\tau_{U}
	= \inf\{t>0: X(t) \notin U\}.
\end{equation}
By applying \eqref{eq:Ydiff1}, we then get:
\begin{equation}
dZ_{\alpha}
	= dY_{\alpha} - dY_{\alpha^{\ast}}
	= \big[ \modpayv_{\alpha} - \modpayv_{\alpha^{\ast}} \big] \dd t
	- d\xi,
\end{equation}
where the martingale term $d\xi$ is defined as in \eqref{eq:Ydiff1}, taking $\beta = \alpha^{\ast}$.
Hence, for all $t\leq \tau_{U}$, we will have:
\begin{equation}
\label{eq:Z1}
Z_{\alpha}(t)
	= Z_{\alpha}(0)
	+ \int_{0}^{t} \left[ \modpayv_{\alpha}(X(s)) - \modpayv_{\alpha^{\ast}}(X(s)) \right] \dd s
	- \xi(t)
	\leq -2M - mt - \xi(t).
\end{equation}

By the time-change theorem for martingales \cite[Cor.~8.5.4]{Oks07}, there exists a standard Wiener process $\wilde W(t)$ such that $\xi(t) = \wilde W(\rho(t))$ where $\rho = [\xi,\xi]$ denotes the quadratic variation of $\xi$;
as such, we will have $Z_{\alpha}(t) \leq - M$ whenever $\wilde W(\rho(t)) \geq -M - mt$.
However, with $\sigma$ Lipschitz over $\strat$, we readily get $\rho(t) \leq Kt$ for some positive constant $K>0$, so it suffices to show that the hitting time
\begin{equation}
\label{eq:hitline}
\tau_{0}
	= \inf\big\{t>0: \wilde W(t) = -M - mt/K \big\}
\end{equation}
is finite with probability not exceeding $\eps$.
Indeed, if a trajectory of $\wilde W(t)$ has $\wilde W(t) \geq -M - mt/K$ for all $t\geq0$, we will also have
\begin{equation}
\wilde W(\rho(t))
	\geq -M - m \rho(t)/K
	\geq - M - mt,
\end{equation}
so $\tau_{U}$ is infinite for every trajectory of $\wilde W$ with infinite $\tau_{0}$, hence $\prob(\tau_{U}<+\infty) \leq \prob(\tau_{0}<+\infty)$.
Lemma \ref{lem:hitprob} then shows that $\prob(\tau_{0} < +\infty) = e^{-2Mm/K}$, so, if we take $M> - (2m)^{-1} K \log \eps$, we get $\prob(\tau_{U} = \infty) \geq 1 - \eps$.
Conditioning on the event $\tau_{U} = +\infty$, Lemma \ref{lem:Wbound} applied to \eqref{eq:Z1} yields
\begin{equation}
\label{eq:Z2}
Z_{\alpha}(t)
	\leq -2M - mt - \xi(t)
	\sim - mt
	\to-\infty
	\quad
	\text{(a.s.)}
\end{equation}
so $X(t) \to \eq$ with probability at least $1-\eps$, as was to be shown.
\end{proof}

\begin{remark}
\label{rem:modified}
As mentioned before, \cite{HI09} state a similar ``evolutionary folk theorem'' in the context of single-population random matching games under the stochastic replicator dynamics with aggregate shocks \eqref{eq:ASRD}.
In particular, \cite{HI09} consider the modified game:
\begin{equation}
\label{eq:pay-mod-AS}
\modpayv_{\alpha}(x)
	= \payv_{\alpha}(x) - \frac{1}{2} \sigma_{\alpha}^{2},
\end{equation}
where $\sigma_{\alpha}$ denotes the intensity of the aggregate shocks in \eqref{eq:ASRD}, and they show that strict Nash equilibria of this noise-adjusted game are stochastically asymptotically stable under \eqref{eq:ASRD}.
It is  interesting to note that the adjustments \eqref{eq:pay-mod} and \eqref{eq:pay-mod-AS} do not coincide: the payoff shocks affect the deterministic replicator equation \eqref{eq:RD} in a different way than the aggregate shocks of \eqref{eq:ASRD}.
%qualitatively however, both Theorem \ref{thm:folk} and the analogous result of \cite{HI09} can be interpreted as showing that strict Nash equilibria are stocastically asymptotically stable if the shocks are ``tame enough''.
%\YV{I added a long remark here, tell me what you think.}
%\PM{Fine by me.}
Heuristically, in the model of \cite{FH92}, noise is detrimental because for a given expected growth rate, noise almost surely lowers the long-term average geometric growth rate of the total number of individuals playing $\alpha$ by the quantity $\frac{1}{2} \sigma^{2}_{\alpha}$.
In a geometric growth process, the quantities that matter (the proper fitness measures) are these long-term geometric growth rates, so the relevant payoffs are those of this modified game.%
% (see the literature on bet-hedging, e.g., Robson and Samuelson...
\footnote{In a discrete time setting, if $Z(n+1)= g(n) Z_n$ and $g(n)=k_i$ with probability $p_i$, what we mean is that the quantity that a.s. governs the long-term growth of $Z$ is not $E(g)=\sum_{i} p_i k_i$, but $\exp (E (\ln g))= \prod_i k_i^{p_i}$.}
In our model, noise is not detrimental, but if it is strong enough compared to the deterministic drift, then, with positive probability, it may lead to other outcomes than the deterministic model.
%So, it is not that the payoffs that matter are those of the modified game.
%\YV{I am not sure of the sentence: ``It is not that the payoffs that matter are those of the modified game", feel free to erase it}
Instead, the assumptions of Theorems \ref{thm:dominated} and \ref{thm:folk} should be interpreted as guaranteeing that the deterministic drift prevails.
One way to see this is to note that if $\beta$ strictly dominates $\alpha$ in the original game and both strategies are affected by the same noise intensity ($\sigma_{\alpha}^{2} = \sigma_{\beta}^{2} = \sigma^{2}$), then $\beta$ need not dominate $\alpha$ in the modified game defined by \eqref{eq:pay-mod}, unless the payoff margin in the original game is always greater than $\sigma^2$. 
%\YV{A bit weird to talk here about dominated strategies while this section is on equilibria, anyway tell me if you like these lines or not. The downside is that this is somewhat heuristic, not precise. The upside is that this is really how I understand these models, what I think I have learnt thinking about it.}
%\PM{Fine by me.}
\end{remark}

\begin{remark}
It is also worth contrasting Theorem \ref{thm:folk} to the unconditional convergence and stability results of \cite{MM10} for the stochastic replicator dynamics of exponential learning \eqref{eq:SXRD}.
As in the case of dominated strategies, the reason for this qualitative difference is the distinct origins of the perturbation process:
the Itô correction in \eqref{eq:SXRD} is ``just right'' with respect to the dual variables $Y_{\alpha} = \log X_{\alpha}$, so a state $\eq\in\strat$ is stochastically asymptotically stable under the \eqref{eq:SXRD} if and only if it is a strict equilibrium of the original game $\game$.
 \end{remark}

%----------------------------------------------------------------------
%%% CUMULATIVE
%----------------------------------------------------------------------
\section{The effect of aggregating payoffs}
\label{sec:cumulative}
%----------------------------------------------------------------------
%%% CUMULATIVE PAYOFFS
%----------------------------------------------------------------------
% !TEX root = ./ShockDynamics.tex

In this section, we examine the case where players are less ``myopic'' and, instead of using revision protocols driven by their instantaneous payoffs, they base their decisions on the cumulative payoffs of their strategies over time.
Formally, focusing for concreteness on the ``imitation of success'' revision protocol \eqref{eq:success}, this amounts to considering conditional switch rates of the form:
\begin{equation}
\label{eq:success-cum}
\tilde\rho_{\alpha\beta}
	= x_{\beta} U_{\beta},
\end{equation}
where
\begin{equation}
\label{eq:pay-cum}
U_{\beta}(t)
	= \int_{0}^{t} \payv_{\beta}(x(s)) \dd s
\end{equation}
denotes the cumulative payoff of strategy $\beta$ up to time $t$. 
%\YV{What would we get if instead of taking the cumulated payoffs, we were taking the average payoff? We loose an order of elimination strength of the main term, but the payoff shocks would also be averaged, so maybe the result would go through. And since the mutation term did not create problems in the first-order dynamics, maybe we can handle it also with the average payoff instead of the cumulated one. Maybe an item for the discussion section. Note that in the learning derivation of the replicator dynamics, taking the cumulative payoff and not the average is crucial for some properties (like consistency), so I think this is a natural question mathematically, and it is also natural from a modelization point of view as it may seem more natural to ask somebody what his average payoff is than what is cumulated payoff is.}
%\PM{As far as dominated strategies are concerned, your intuition is correct. I'll have to think a bit more about strict equilibria.}
%\PM{At this point, I am leaning towards mentioning average payoffs as an add-on at the end of this section (or, possibly, in the discussion section). The main reason is that I don't feel like rewriting the proofs (and, also, because things are a bit trickier in the case of strict equilibria).}
In this case, \eqref{eq:RD} becomes:
\begin{equation}
\label{eq:RD-cum}
\dot x_{\alpha}
	= x_{\alpha} \left[ U_{\alpha} - \insum_{\beta} x_{\beta}U_{\beta} \right],
\end{equation}
and, as was shown by \cite{LM13}, the evolution of mixed strategy shares is governed by the (deterministic) \emph{second order replicator dynamics}:
\begin{equation}
\label{eq:RD-2}
\tag{RD$_{2}$}
\ddot x_{\alpha}
	= x_{\alpha} \left[ \payv_{\alpha}(x) - \insum_{\beta} x_{\beta}\pay_{\beta}(x) \right]
	+ x_{\alpha} \left[ \dot x_{\alpha}^{2}/x_{\alpha}^{2} - \insum_{\beta} \dot x_{\beta}^{2} / x_{\beta} \right].
\end{equation}

As in the previous section, we are interested in the effects of random payoff shocks on the dynamics \eqref{eq:RD-2}. %To that end, by reasoning as in Eq.~\eqref{eq:mutation}, we will model the net inflow to strategy $\alpha$ due to random mutations as
%\begin{equation}
%X_{\alpha} \insum_{\beta} X_{\beta} \zeta_{\beta\alpha}(X) \dd W_{\beta\alpha},
%\end{equation}
%with $\zeta_{\beta\alpha}$ and $dW_{\beta\alpha}$ defined as in Section \ref{sec:model}.
If the game's payoff functions are subject to random shocks at each instant in time, then these shocks will also be aggregated over time, leading to the perturbed cumulative payoff process:
\begin{equation}
\label{eq:pay-cum-noise}
\hat U_{\alpha}(t)
	= \int_{0}^{t} \payv_{\alpha}(X(s)) \dd s
	+ \int_{0}^{t} \sigma_{\alpha}(X(s)) \dd W_{\alpha}(s).
\end{equation}
%\YV{Here it is important to deal with real payoff shock and not misperception of the cumulative payoffs (because these misperceptions would not be aggregated over time).}
%\PM{This is debatable (since players would be collecting noisy observations over time), but we do not care anyway.}
Since $\hat U_{\alpha}$ is continuous (a.s.), we obtain the stochastic integro-differential dynamics:
\begin{equation}
\label{eq:SRD-cum}
\begin{aligned}
\dot X_{\alpha}
	&= X_{\alpha} \left[
	U_{\alpha}(t) - \insum_{\beta} X_{\beta}(t) U_{\beta}(t)
%	\int_{0}^{t} \payv_{\alpha}(X(s)) \dd s
%	- \insum_{\beta} \int_{0}^{t} X_{\beta}(s) \payv_{\beta}(X(s)) \dd s
	\right]
	\\
	&+ X_{\alpha} \left[
	\int_{0}^{t} \sigma_{\alpha}(X(s)) \dd W_{\alpha}(s)
	- \insum_{\beta} \int_{0}^{t} X_{\beta}(s) \sigma_{\beta}(X(s)) \dd W_{\beta}(s)
	\right],
%	\\
%	&+ X_{\alpha} \insum_{\beta} X_{\beta} \zeta_{\beta\alpha}(X) \dd W_{\beta\alpha}
\end{aligned}
\end{equation}
where, as in \eqref{eq:SRD}, we assume that the Brownian disturbances $W_{\alpha}(t)$ are independent.

To obtain an autonomous \ac{SDE} from \eqref{eq:SRD-cum}, let $V_{\alpha} = \dot X_{\alpha}$ denote the growth rate of strategy $\alpha$.
%(that $X(t)$ is differentiable follows from the fact that the RHS of \eqref{eq:SRD-cum} is continuous).
Then, differentiating \eqref{eq:SRD-cum} yields:
\begin{subequations}
\label{eq:dV1}
\begin{flalign}
dV_{\alpha}
	&\label{eq:dV-det}
	= X_{\alpha} \left[ \dot U_{\alpha} - \insum_{\beta} X_{\beta} \dot U_{\beta} \right] \dd t
	\\
	&\label{eq:dV-speed}
	+ V_{\alpha} \left[ U_{\alpha} - \insum_{\beta} X_{\beta} U_{\beta} \right] dt
	- X_{\alpha} \insum_{\beta} U_{\beta} V_{\beta} \dd t
	\\
	&\label{eq:dV-intnoise}
	+ V_{\alpha} \left[
	\int_{0}^{t} \sigma_{\alpha}(X(s)) \dd W_{\alpha}(s)
	- \insum_{\beta} \int_{0}^{t} X_{\beta}(s) \sigma_{\beta}(X(s)) \dd W_{\beta}(s)
	\right] dt
	\\
	&\label{eq:dV-noise}
	+ X_{\alpha} \left[ \sigma_{\alpha}(X) \dd W_{\alpha} - \insum_{\beta} \sigma_{\beta}(X) X_{\beta} \dd W_{\beta} \right].
\end{flalign}
\end{subequations}
By \eqref{eq:SRD-cum}, the sum of the first term of \eqref{eq:dV-speed} and \eqref{eq:dV-intnoise} is equal to $V_{\alpha}^{2}/X_{\alpha}$.
Thus, using \eqref{eq:pay-cum} we obtain:
\begin{flalign}
dV_{\alpha}
	&= X_{\alpha} \left[ \payv_{\alpha}(X) - \insum_{\beta} X_{\beta} \payv_{\beta}(X) \right] dt
%	\notag\\
	+ \frac{V_{\alpha}^{2}}{X_{\alpha}} dt - X_{\alpha} \insum_{\beta} U_{\beta} V_{\beta} \dd t
	\notag\\
	&+ X_{\alpha} \left[ \sigma_{\alpha}(X) \dd W_{\alpha} - \insum_{\beta} \sigma_{\beta}(X) X_{\beta} \dd W_{\beta} \right],
\end{flalign}
and, after summing over all $\alpha$ and solving for $X_{\alpha} \insum_{\beta} U_{\beta} V_{\beta} \dd t$, we get the second order \ac{SDE} system:%
\footnote{Recall that $\insum_{\alpha} dV_{\alpha} = 0$ since $\insum_{\alpha} X_{\alpha} = 1$.}
\begin{flalign}
\label{eq:SRD-2}
dX_{\alpha}
	&= V_{\alpha} \dd t
	\notag\\
dV_{\alpha}
	&= X_{\alpha} \left[ \payv_{\alpha}(X) - \insum_{\beta} x_{\beta} \payv_{\beta}(X) \right] dt
	+ X_{\alpha} \left[ V_{\alpha}^{2}/X_{\alpha}^{2} - \insum_{\beta} V_{\beta}^{2}/X_{\beta} \right] dt
%	\tag{SRD$_{2}$}
	\\
	&+ X_{\alpha} \left[ \sigma_{\alpha}(X) \dd W_{\alpha} - \insum_{\beta} \sigma_{\beta}(X) X_{\beta} \dd W_{\beta} \right].
	\notag
\end{flalign}

By comparing the second order system \eqref{eq:SRD-2} to \eqref{eq:RD-2}, we see that there is no Itô correction, just as in the first order case.%
\footnote{The reason however is different:
in \eqref{eq:SRD}, there is no Itô correction because the noise is added directly to the dynamical system under study;
in \eqref{eq:SRD-2}, there is no Itô correction because the noise is integrated over, so $X_{\alpha}$ is smooth (and, hence, obeys the rules of ordinary calculus).}
By using similar arguments as in \cite{LM13}, it is then possible to show that the system \eqref{eq:SRD-2} is well-posed, i.e. it admits a unique (strong) solution $X(t)$ for every interior initial condition $X(0)\in\intstrat$, $V(0)\in\R^{\act}$ and this solution remains in $\intstrat$ for all time.

%In contrast to the stochastic replicator dynamics \eqref{eq:SRD}, we see that payoff shocks and random mutations play very different roles in \eqref{eq:SRD-cum}:
%the effect of mutations is still represented by a martingale diffusion term, but the effect of payoff shocks now appears in the drift of \eqref{eq:SRD-cum}, precisely because these payoff shocks have been themselves aggregated over time.
%Owing to these differences, the long-term rationality properties of \eqref{eq:SRD-cum} are qualitatively different from those of \eqref{eq:SRD}.

With this well-posedness result at hand, we begin by showing that \eqref{eq:SRD-2} eliminates strategies that are dominated in the original game $\game$ (instead of the modified game $\modgame$):
%(as opposed to the noise-adjusted game $\modgame$):

\begin{theorem}
\label{thm:dominated-cum}
Let $X(t)$ be a solution orbit of the dynamics \eqref{eq:SRD-cum} and assume that $\alpha\in\act$ is dominated by $\beta\in\act$.
Then, $\alpha$ becomes extinct \textup(a.s.\textup).
\end{theorem}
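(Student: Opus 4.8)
The plan is to exploit the structural difference between \eqref{eq:SRD-cum} and \eqref{eq:SRD}: because the noise enters \eqref{eq:SRD-cum} only through the pathwise-continuous perturbed cumulative payoffs $\hat U_\alpha$, the system is, for (a.a.) fixed noise realizations, an \emph{ordinary} differential equation with continuous right-hand side. Hence its solution $X(\cdot)$ is $C^1$ in $t$ with $X_\alpha(t)>0$ for all $t$ (by the well-posedness statement preceding the theorem, applied to interior initial data; if $X_\alpha(0)=0$ there is nothing to prove), and the logarithmic variables $Y_\alpha=\log X_\alpha$ obey the ordinary chain rule with \emph{no} Itô correction. This is precisely why the relevant comparison game here is $\game$ and not the noise-adjusted $\modgame$.

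First I would divide \eqref{eq:SRD-cum} through by $X_\alpha$ to get $\dot Y_\alpha = \hat U_\alpha(t) - c(t)$, where $c(t)$ is a term that does not depend on $\alpha$; subtracting the analogous identity for $\beta$ kills $c(t)$ and leaves
\[
\dot Y_\alpha - \dot Y_\beta = \hat U_\alpha(t) - \hat U_\beta(t).
\]
Integrating, $\log\!\big(X_\alpha(t)/X_\beta(t)\big) = \log\!\big(X_\alpha(0)/X_\beta(0)\big) + \int_0^t\big[\hat U_\alpha(s)-\hat U_\beta(s)\big]\,ds$, so it suffices to prove that $\hat U_\alpha(s)-\hat U_\beta(s)\to-\infty$ (a.s.): this forces the integral, hence $\log(X_\alpha/X_\beta)$, to $-\infty$, and since $X_\beta\le1$ we conclude $X_\alpha(t)\to0$.

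For the last claim I would split $\hat U_\alpha(s)-\hat U_\beta(s)=\int_0^s\big[\payv_\alpha-\payv_\beta\big](X(r))\,dr + \big[M_\alpha(s)-M_\beta(s)\big]$, where $M_\gamma(s)=\int_0^s\sigma_\gamma(X(r))\,dW_\gamma(r)$. Continuity of the payoffs on the compact set $\strat$ together with $\payv_\alpha(x)<\payv_\beta(x)$ for all $x$ gives a uniform margin $\payv_\beta-\payv_\alpha\ge m>0$, so the drift integral is $\le -ms$. Boundedness of $\sigma$ over $\strat$ makes the quadratic variation $[M_\alpha-M_\beta](s)=\int_0^s\big(\sigma_\alpha^2+\sigma_\beta^2\big)(X(r))\,dr$ grow at most linearly in $s$, so Lemma~\ref{lem:Wbound} gives $M_\alpha(s)-M_\beta(s)=o(s)$ (a.s.); hence $\hat U_\alpha(s)-\hat U_\beta(s)\le -ms+o(s)\to-\infty$ (a.s.), as needed.

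I do not expect a serious obstacle here: the only point requiring care is the pathwise reduction to an ODE and the attendant use of the ordinary chain rule for $\log X_\alpha$, which is justified by the (a.s.) continuity of $t\mapsto\hat U_\alpha(t)$ and the fact that the solution stays in $\intstrat$; no stochastic calculus is invoked beyond that, and the remaining estimates are a simpler version of the Grönwall-type bookkeeping already used in the proof of Theorem~\ref{thm:dominated} (indeed easier, since there is no Itô term to absorb into a modified payoff, and the quadratic growth of $\int_0^t(\hat U_\alpha-\hat U_\beta)$ makes the margin requirement essentially automatic).
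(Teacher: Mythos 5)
Your proposal is correct and follows essentially the same route as the paper: pass to $Y_\alpha=\log X_\alpha$ (noting, as the paper does in a footnote, that $X$ is pathwise $C^1$ so no Itô correction arises), observe that $\dot Y_\alpha-\dot Y_\beta=\hat U_\alpha-\hat U_\beta$, bound the drift part of $\hat U_\alpha-\hat U_\beta$ by $-ms$ via the domination margin, control the stochastic-integral part by Lemma~\ref{lem:Wbound}, and integrate once more to force $\log\bigl(X_\alpha/X_\beta\bigr)\to-\infty$ at rate $-\tfrac{1}{2}mt^2$. Nothing is missing.
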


\begin{proof}
As in the proof of Theorem \ref{thm:dominated},
%we suppress the population index $k$ for simplicity and we
let $Y_{\alpha} = \log X_{\alpha}$.
Then, following the same string of calculations leading to \eqref{eq:dY2}, we get:%
\footnote{Recall that $\int_{0}^{t} \sigma_{\alpha}(X(s)) \dd W_{\alpha}(s)$ is continuous, so the only Itô correction stems from random mutations.}
\begin{subequations}
\label{eq:dYdiff-cum}
\begin{flalign}
dY_{\alpha} - dY_{\beta}
	&\label{eq:dY-drift-pay}
	= \left[ U_{\alpha} - U_{\beta} \right] dt
	\\
	&\label{eq:dY-drift-noise}
	+ \left[
	\int_{0}^{t} \sigma_{\alpha}(X) \dd W_{\alpha} - \int_{0}^{t} \sigma_{\beta}(X) \dd W_{\beta}
	\right] dt
%	\\
%	&\label{eq:dY-drift-Ito}
%	- \frac{1}{2} \insum_{\gamma} X_{\gamma}^{2} \left(\zeta_{\gamma\alpha}^{2} - \zeta_{\gamma\beta}^{2}\right) dt
%	\\
%	&\label{eq:dY-diff}
%	+ \insum_{\gamma} X_{\gamma} \left( \zeta_{\gamma\alpha} \dd W_{\gamma\alpha} - \zeta_{\gamma\beta} \dd W_{\beta}\right).
\end{flalign}
\end{subequations}
Since $\alpha$ is dominated by $\beta$, there exists some positive $m>0$ such that $\payv_{\alpha} - \payv_{\beta} \leq - m$, and hence $U_{\alpha}(t) - U_{\beta}(t) \leq - mt$.
Furthermore, with $\sigma$ bounded and continuous on $\strat$, Lemma \ref{lem:Wbound} readily yields:
\begin{equation}
-mt
	+ \left[
	\int_{0}^{t} \sigma_{\alpha}(X) \dd W_{\alpha} - \int_{0}^{t} \sigma_{\beta}(X) \dd W_{\beta}
	\right]
%	- \frac{1}{2} \insum_{\gamma} X_{\gamma}^{2} \left(\zeta_{\gamma\alpha}^{2} - \zeta_{\gamma\beta}^{2}\right)
	\sim - mt
\end{equation}
as $t\to\infty$.
Accordingly, \eqref{eq:dYdiff-cum} becomes:
\begin{equation}
\label{eq:dYdiff-cum2}
dY_{\alpha} - dY_{\beta}
	\leq - mt dt
	+ \theta(t) dt
%	+ \insum_{\gamma} X_{\gamma} \left( \zeta_{\gamma\alpha} \dd W_{\gamma\alpha} - \zeta_{\gamma\beta} \dd W_{\beta}\right),
\end{equation}
where the remainder function $\theta(t)$ corresponding to the drift term \eqref{eq:dY-drift-noise} is sublinear in $t$.
By integrating and applying Lemma \ref{lem:Wbound} a second time, we then obtain:
\begin{flalign}
\label{eq:dYdiff-cum3}
Y_{\alpha}(t) - Y_{\beta}(t)
	&\leq Y_{\alpha}(0) - Y_{\beta}(0)
	 - \frac{1}{2} mt^{2}
	 + \int_{0}^{t} \theta(s) \dd s
%	 \notag\\
%	 &+ \insum_{\gamma} \int_{0}^{t} X_{\gamma}(s) \left(
%	 \zeta_{\gamma\alpha}(X(s)) \dd W_{\gamma\alpha}(s) - \zeta_{\gamma\beta}(X(s)) \dd W_{\beta}(s)
%	 \right)
%	 \notag\\
	 \sim - \frac{1}{2} mt^{2}
	 \quad
	 \text{(a.s.)}.
\end{flalign}
We infer that $\lim_{t\to\infty} Y_{\alpha}(t) = 0$ (a.s.), i.e. $\alpha$ becomes extinct along $X(t)$.
\end{proof}

\begin{remark}
In view of Theorem \ref{thm:dominated-cum}, we see that the ``imitation of long-term success'' protocol \eqref{eq:success-cum} provides more robust elimination results than \eqref{eq:success} in the presence of payoff shocks:
contrary to Theorem \ref{thm:dominated}, there are no ``small noise'' requirements in Theorem \ref{thm:dominated-cum}.%
\footnote{Theorem \ref{thm:dominated-cum} actually applies to mixed dominated strategies as well (even iteratively dominated ones).
The proof is a simple adaptation of the pure strategies case, so we omit it.}
%\YV{To be sure, in the footnote, do you mean that the result applies to mixed dominated by mixed or to mixed dominated by pure?}
%\PM{Mixed by mixed.}
\end{remark}

Our next result provides the analogue of Theorem \ref{thm:folk} regarding the stability of equilibrium play:

\begin{theorem}
\label{thm:folk-cum}
Let $X(t)$ be an interior solution orbit of the stochastic dynamics \eqref{eq:SRD-cum} and let $\eq \in \strat$.
Then:
\begin{enumerate}
\item
If $\prob\left(\lim_{t\to\infty} X(t) = \eq\right) > 0$, $\eq$ is a Nash equilibrium of $\game$.
\end{enumerate}
Moreover, for every neighborhood $U_{0}$ of $\eq$ and for all $\eps>0$, we have:
\begin{enumerate}
\setcounter{enumi}{1}
\item
If
\(
\prob(\text{$X(t)\in U_{0}$ for all $t\geq0$})
	\geq 1-\eps
	\text{ whenever $X(0) \in U$}
\)
for some neighborhood $U\subseteq U_{0}$ of $\eq$, then $\eq$ is a Nash equilibrium of $\game$.
\item
If $\eq$ is a strict Nash equilibrium of $\game$, there exists a neighborhood $U$ of $\eq$ such that:
\begin{equation}
\prob\left(\text{$X(t) \in U_{0}\,$ for all $t\geq0$ and $\lim_{t\to\infty} X(t) = \eq$}\right)
	\geq 1- \eps,
%	\quad
%	\text{whenever $X(0)\in U$.}
\end{equation}
whenever $X(0)\in U$.
\end{enumerate}
\end{theorem}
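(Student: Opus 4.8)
The plan is to follow the proof of Theorem~\ref{thm:folk} almost verbatim, working again with the dual variables $Y_{\alpha} = \log X_{\alpha}$, but replacing the role of the modified payoffs $\modpayv_{\alpha}$ by the \emph{cumulative} payoffs $U_{\alpha}$ of the \emph{original} game $\game$. The reason this works is that, by \eqref{eq:dYdiff-cum}, the drift of $Y_{\alpha} - Y_{\beta}$ is now $U_{\alpha} - U_{\beta} = \int_{0}^{t}\bigl[\payv_{\alpha}(X) - \payv_{\beta}(X)\bigr]\dd s$ — there is no Itô correction, since $X_{\alpha}$ is smooth — while the martingale part is the \emph{twice}-integrated Brownian term $I_{\alpha}(t) = \int_{0}^{t} N_{\alpha}(s)\dd s$ with $N_{\alpha}(s) = \int_{0}^{s}\bigl[\sigma_{\alpha}(X)\dd W_{\alpha} - \sigma_{\beta}(X)\dd W_{\beta}\bigr]$. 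A uniform payoff gap of size $m$ near $\eq$ thus produces a drift of order $-mt$, hence a \emph{quadratic} separation $Y_{\alpha} - Y_{\beta} \approx -\tfrac12 m t^{2}$, exactly as in the proof of Theorem~\ref{thm:dominated-cum}; and since $\sigma$ is bounded, applying Lemma~\ref{lem:Wbound} twice (first to $N_{\alpha}$, then to $I_{\alpha}$) gives $I_{\alpha}(t) = o(t^{2})$ almost surely, so the noise is negligible at the scale at which the drift operates.

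For Parts~(1) and (2) I argue by contradiction, as in Theorem~\ref{thm:folk}. If $\eq$ is not a Nash equilibrium of $\game$, pick $\alpha\in\supp(\eq)$ and $\beta\in\act$ with $\payv_{\alpha}(\eq) < \payv_{\beta}(\eq)$, together with an open neighborhood $W$ of $\eq$ on which $\payv_{\beta} - \payv_{\alpha} \geq m > 0$ and $x_{\alpha} \geq \eq_{\alpha}/2 > 0$. In the setting of Part~(1), on the positive-probability event $\{\lim_{t}X(t) = \eq\}$ the orbit eventually enters $W$, while in the setting of Part~(2), after choosing $U_{0}\subseteq W$ and $\eps < 1$, the orbit stays in $U_{0}\subseteq W$ for all $t$ with positive probability; in either case $U_{\beta}(t) - U_{\alpha}(t) \geq mt - C$ for $t$ large and some (possibly random) constant $C$. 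By \eqref{eq:dYdiff-cum} and the bound $I_{\alpha}(t) = o(t^{2})$ we then get $Y_{\alpha}(t) - Y_{\beta}(t) \leq -\tfrac12 m t^{2} + o(t^{2}) \to -\infty$, hence $X_{\alpha}(t) \to 0$, contradicting $X_{\alpha}(t) \geq \eq_{\alpha}/2$. Part~(2) follows from Part~(1) in the usual way, since this contradiction is reached on an event of positive probability.

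For Part~(3), let $\eq = (\alpha_{1}^{\ast},\dotsc,\alpha_{N}^{\ast})$ be a strict equilibrium of $\game$ and set $Z_{\alpha} = Y_{\alpha} - Y_{\alpha^{\ast}}$ for $\alpha\in\act^{\ast} \equiv \act\exclude{\alpha^{\ast}}$, so that $X(t)\to\eq$ if and only if $Z_{\alpha}(t)\to-\infty$ for all $\alpha\in\act^{\ast}$. Given $\eps>0$ and a neighborhood $U_{0}$ of $\eq$, pick a neighborhood $U\subseteq U_{0}$ and $m>0$ with $\payv_{\alpha^{\ast}}(x) - \payv_{\alpha}(x) \geq m$ on $U$ for all $\alpha\in\act^{\ast}$, and $M>0$ large enough that $\{x : Z_{\alpha}(x) \leq -M \text{ for all }\alpha\in\act^{\ast}\}\subseteq U$. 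For $Z_{\alpha}(0)\leq -2M$ and $t\leq\tau_{U} = \inf\{t : X(t)\notin U\}$, \eqref{eq:dYdiff-cum} with $\beta = \alpha^{\ast}$ gives
\begin{equation*}
Z_{\alpha}(t)
	= Z_{\alpha}(0) + \int_{0}^{t}\bigl[U_{\alpha}(s) - U_{\alpha^{\ast}}(s)\bigr]\dd s + I_{\alpha}(t)
	\leq -2M - \tfrac12 m t^{2} + I_{\alpha}(t).
\end{equation*}
Since $I_{\alpha}$ is continuous with $I_{\alpha}(0)=0$ and $I_{\alpha}(t) - \tfrac12 m t^{2} \to -\infty$ almost surely, the random variable $R = \max_{\alpha\in\act^{\ast}}\sup_{t\geq0}\bigl(I_{\alpha}(t) - \tfrac12 m t^{2}\bigr)$ is almost surely finite; enlarging $M$ if necessary, we may assume $\prob(R\leq M)\geq 1-\eps$. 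On the event $\{R\leq M\}$ the display yields $Z_{\alpha}(t)\leq -M$ for all $t\leq\tau_{U}$, so $X(t)$ cannot leave $U$, i.e. $\tau_{U} = +\infty$; the display then holds for all $t\geq0$, whence $Z_{\alpha}(t)\leq -2M - \tfrac12 m t^{2} + o(t^{2})\to-\infty$, that is $X(t)\to\eq$. Taking for the neighborhood in the statement any neighborhood of $\eq$ contained in $\{x : Z_{\alpha}(x)\leq -2M\text{ for all }\alpha\in\act^{\ast}\}$ completes the argument.

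The only place where this genuinely departs from Theorem~\ref{thm:folk} is the control of the doubly-integrated martingale $I_{\alpha}$: Lemma~\ref{lem:hitprob} and the ``hitting a sloping line'' computation no longer apply directly, because the competing drift grows quadratically rather than linearly in $t$. What replaces it is the elementary remark that $I_{\alpha}(t) - \tfrac12 m t^{2}$ drifts to $-\infty$ almost surely — a consequence of $I_{\alpha}(t) = o(t^{2})$, itself obtained by applying Lemma~\ref{lem:Wbound} twice as above — so its supremum over $[0,\infty)$ is an almost-surely finite random variable whose $(1-\eps)$-quantile furnishes the threshold $M$. Establishing $I_{\alpha}(t) = o(t^{2})$ is the only mildly delicate estimate, and it is the one place where boundedness of the $\sigma_{\alpha}$ is used.
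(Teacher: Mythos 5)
Your Parts 1 and 2 follow the paper's proof essentially verbatim: the same contradiction argument via $Y_{\alpha}-Y_{\beta}$, the same observation that the drift accumulates to $-\tfrac12 mt^{2}$ while the integrated noise is $o(t^{2})$ by a double application of Lemma \ref{lem:Wbound}, and the same conclusion $X_{\alpha}(t)\to 0$. (Your handling of the conditioning — ``eventually enters $W$, so $U_{\beta}-U_{\alpha}\geq mt-C$ for a random $C$'' — is if anything slightly cleaner than the paper's.) Part 3 is where you genuinely diverge. The paper obtains the uniform-in-time control of the noise \emph{quantitatively}: it dominates the integrated noise by $\int_{0}^{t}W(s)\dd s$ for a standard Wiener process $W$, splits the quadratic barrier as $at^{2}+bt+c$, reduces the event $\{\int_{0}^{t}W(s)\dd s = at^{2}+bt+c \text{ for some } t\}$ to the linear hitting event $\{W(t)=2at+b \text{ for some } t\}$ via the mean value theorem, and then invokes Lemma \ref{lem:hitprob} to get an explicit bound $\exp(-2ab)$ that is small for large $M$. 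You instead argue softly: $R=\sup_{t}\bigl(I_{\alpha}(t)-\tfrac12 mt^{2}\bigr)$ is a.s.\ finite, so take $M$ beyond its $(1-\eps)$-quantile. Your route avoids Lemma \ref{lem:hitprob} entirely; the paper's route buys an explicit, initial-condition-free tail estimate.

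That last point is where your write-up has a gap. The law of your $R$ depends on the trajectory $X(\cdot)$, hence on the initial condition $X(0)$, and the admissible initial conditions are themselves constrained by $M$ (namely $Z_{\alpha}(0)\leq -2M$). ``Enlarging $M$ if necessary so that $\prob(R\leq M)\geq 1-\eps$'' is therefore circular as stated: you need a \emph{single} $M$ for which the bound holds uniformly over all $X(0)$ in the resulting neighborhood. This is fixable — since the quadratic variation rate of $N_{\alpha}$ is bounded by a constant $K$ independent of $X(0)$, the time-change theorem gives $\smallabs{N_{\alpha}(s)}\leq \sup_{u\leq Ks}\smallabs{\wilde W_{\alpha}(u)}$ for a standard Wiener process $\wilde W_{\alpha}$, so $R$ is stochastically dominated by a random variable whose law does not depend on $X(0)$, and the $(1-\eps)$-quantile of \emph{that} variable furnishes a uniform $M$ — but this domination step must be said explicitly, and it is precisely the step the paper's hitting-time computation performs for free.
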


\begin{remark}
Part 1 of Theorem \ref{thm:folk-cum} is in direct analogy with Part 1 of Theorem \ref{thm:folk}:
the difference is that Theorem \ref{thm:folk-cum} shows that only Nash equilibria of the original game $\game$ can be $\omega$-limits of interior orbits with positive probability.
Put differently, if $\eq$ is a strict equilibrium of $\modgame$ but not of $\game$,%
\footnote{Recall here that strict equilibria of $\game$ are also strict equilibria of $\modgame$, but the converse need not hold.}
there is zero probability that \eqref{eq:SRD-cum} converges to $\eq$.

On the other hand, Parts 2 and 3 are not tantamount to stochastic stability (Lyapunov or asymptotic) under the autonomous \ac{SDE} system \eqref{eq:SRD-2}.
The difference here is that \eqref{eq:SRD-2} is only well-defined in the interior of $\intstrat$, so it is not straightforward how to define the notion of (stochastic) stability for boundary points $\eq\in\bd(\strat)$;
moreover, given that \eqref{eq:SRD-2} is a second order system, stability should be stated in terms of the problem's entire phase space, including initial velocities (for a relevant discussion, see \citealp{LM13}).
Instead, the stated stability conditions simply reflect the fact that the integro-differential dynamics \eqref{eq:SRD-cum} always start with initial velocity $V(0) = 0$, so this added complexity is not relevant.
%Instead, the stated stability conditions reflect the fact that the integro-differential dynamics \eqref{eq:SRD-cum} always start with initial velocity $V(0) = 0$.
\end{remark}

\begin{proof}[Proof of Theorem \ref{thm:folk-cum}]
We shadow the proof of Theorem \ref{thm:folk}.

\paragraph{Part 1.}
Assume that $\prob\left(\lim_{t\to\infty} X(t) = \eq\right) > 0$ for some $\eq\in\strat$.
If $\eq$ is not a Nash equilibrium of $\game$, we will have $\payv_{\alpha}(\eq) < \payv_{\beta}(\eq)$ for some $\alpha\in\supp(\eq)$, $\beta\in\act$. 
Accordingly, let $U$ be a sufficiently small neighborhood of $\eq$ in $\strat$ such that $\payv_{\beta}(x) - \payv_{\alpha}(x) \geq m$ for some $m > 0$ and for all $x\in U$.
Since $X(t)\to\eq$ with positive probability, it also follows that $\prob(X(t)\in U \text{ for all } t\geq0) >0$;
hence, arguing as in \eqref{eq:dYdiff-cum3} and conditioning on the positive probability event ``$X(t)\in U$ for all $t\geq0$'', we get:
\begin{equation}
\label{eq:Ydiff-cum4}
Y_{\alpha}(t) - Y_{\beta}(t)
%	\leq Y_{\alpha}(0) - Y_{\beta}(0)
%	 - \frac{1}{2} mt^{2}
%	 + \int_{0}^{t} \theta(s) \dd s
%	 \notag\\
%	 &+ \insum_{\gamma} \int_{0}^{t} X_{\gamma}(s) \left(
%	 \zeta_{\gamma\alpha}(X(s)) \dd W_{\gamma\alpha}(s) - \zeta_{\gamma\beta}(X(s)) \dd W_{\beta}(s)
%	 \right)
%	 \notag\\
	 \sim - \frac{1}{2} mt^{2}
	 \quad
	 \text{(conditionally a.s.).}
\end{equation}
This implies $X_{\alpha}(t) \to 0$, contradicting our original assumption that $X(t)$ stays in a small neighborhood of $\eq$.
%(recall that $\eq_{\alpha}>0$).
We infer that $\eq$ is a Nash equilibrium of $\game$, as claimed.

\paragraph{Part 2.}
Simply note that the stability assumption of Part 2 implies that there exists a positive measure of interior trajectories $X(t)$ that remain in an arbitrarily small neighborhood of $\eq$ with positive probability.
The proof then follows as in Part 1.

\paragraph{Part 3.}
Let $Z_{\alpha} = Y_{\alpha} - Y_{\alpha^{\ast}}$ be defined as in \eqref{eq:Zdef} and let $m>0$ be such that $\payv_{\alpha^{\ast}}(x) - \payv_{\alpha}(x) \geq m$ for all $x$ in some sufficiently small neighborhood of $\eq$.
Also, let $M>0$ be sufficiently large so that $X(t) \in U$ if $Z_{\alpha}(t) \leq -M$ for all $\alpha\in\act^{\ast}$;
as in the proof of Theorem \ref{thm:folk}, we will show that there is a suitable choice of $M$ such that $Z_{\alpha}(0) < -2M$ for all $\alpha\neq\alpha^{\ast}$ implies that $X(t) \in U$ for all $t\geq0$ and $Z_{\alpha}(t) \to -\infty$ for all $\alpha\neq\alpha^{\ast}$ with probability at least $1-\eps$.

Indeed, by setting $\beta=\alpha^{\ast}$ in \eqref{eq:dYdiff-cum}, we obtain:
\begin{flalign}
\label{eq:dZ-cum}
dZ_{\alpha}
	&= \left[ U_{\alpha} - U_{\alpha^{\ast}} \right] dt
%	\notag\\
	+ \left[
	\int_{0}^{t} \sigma_{\alpha}(X) \dd W_{\alpha} - \int_{0}^{t} \sigma_{\alpha^{\ast}}(X) \dd W_{\alpha^{\ast}}
	\right] dt
%	- \frac{1}{2} \insum_{\gamma} X_{\gamma}^{2} \left(\zeta_{\gamma\alpha}^{2} - \zeta_{\gamma\alpha^{\ast}}^{2}\right) dt
%	\notag\\
%	&+ \insum_{\gamma} X_{\gamma} \left( \zeta_{\gamma\alpha} \dd W_{\gamma\alpha} - \zeta_{\gamma\alpha^{\ast}} \dd W_{\alpha^{\ast}}\right),
\end{flalign}
so, recalling ({eq:pay-cum-noise}), for all $t\leq \tau_{U} = \inf\{t>0: X(t) \notin U\}$, we will have:
\begin{equation}
\label{eq:dZ-cum2}
Z_{\alpha}(t)
	\leq -2M - \frac{1}{2} mt^{2} + \int_{0}^{t} \theta(s) \dd s - \xi(t),
\end{equation}
where $\xi$ denotes the martingale part of \eqref{eq:dZ-cum} and $\theta(t)$ is defined as in \eqref{eq:dYdiff-cum2}.

Now, let $W(t)$ be a Wiener process starting at the origin.
We will show that if $M$ is chosen sufficiently large, then
\begin{equation}
\label{eq:hitprob2}
\prob\left(
	\txs
	M + \frac{1}{2} mt^{2}
	\geq \int_{0}^{t} W(s) \dd s
	\;
	\text{for all $t\geq0$}
	\right)
	\geq 1-\eps.
\end{equation}
%We first show that, for large enough $M>0$, the trajectories of $W_{1}(t)$ remain below the parabola $M/2 + mt^{2}/4$ with probabality at least $\sqrt{1-\eps}$.
%Indeed, for any $r>0$, we have $\frac{1}{2}M + \frac{1}{4}mt^{2} \geq \frac{1}{2}M + \frac{1}{2} mr(t-r/2)$ for all $t\geq0$, so the probability that $W_{1}(t)$ hits $M/2 + mt^{2}/4$ in finite time is bounded from above by the probability to hit the line $\frac{1}{2}M + \frac{1}{2} mr(t-r/2)$ in finite time.
%By Lemma \ref{lem:hitprob}, this probability is equal to $\exp\left(- mr (M/2 - mr^{2}/4)\right)$, so it can be taken smaller than $1 - \sqrt{1-\eps}$ by choosing $M$ large enough.
%As a second step, we will show that $\int_{0}^{t} W_{2}(s) \dd s \leq \frac{1}{2}M + \frac{1}{4} mt^{2}$ for all $t\geq0$ with probability at least $\sqrt{1-\eps}$ if $M$ is chosen sufficiently large.
With a fair degree of hindsight, we note first that $\frac{1}{4}mt^{2} + \frac{1}{2}M \geq a t^{2} + bt + c$ where $a=\frac{1}{4}m$, $b = \frac{1}{2}\sqrt{Mm}$ and $c = \frac{1}{2}M$, so it suffices to show that the hitting time $\tau = \inf\{t: \int_{0}^{t} W(s) \dd s = at^{2} + bt + c\}$ is infinite with probability at least $1-\eps$.
%To proceed, we will estimate the probabilities $P_{-} = \prob[\tau\leq 2c/b]$ and $P_{+} = \prob[2c/b \leq \tau <\infty]$.
%If $\tau \geq 2c/b$, then,
However, by the mean value theorem, there exists some (random) time $\tau_{1}$ such that:
\begin{equation}
2a\tau_{1} + b - W(\tau_{1})
	= \frac{0 - c}{\tau}
	\leq 0.
\end{equation}
Since $c/\tau > 0$, the hitting time $\tau' = \inf\{t>0: W(t) = 2at + b\}$ will satisfy $\tau'(\omega) < \tau(\omega)$ for every trajectory $\omega$ of $W$ with $\tau(\omega) < \infty$.
However, Lemma \ref{lem:hitprob} gives $\prob[\tau'<\infty] = \exp(-2ab)$, hence:
\begin{equation}
\prob(\tau<\infty)
	\leq \prob(\tau'<\infty)
	= \exp(-2ab)
	= \exp(-Mm/4),
\end{equation}
i.e. $\prob(\tau<\infty)$ can be made arbitrarily small by choosing $M$ large enough.
%On the other hand, for $P_{-}$, let $\tau_{2} = \inf\{t>0: \int_{0}^{t} W_{2}(s) \dd s = c\}$, so $\tau_{2} \leq \tau$.
%Then, if $\tau \leq 2c/b$, a second application of the mean value theorem yields:
%\begin{equation}
%W_{2}(s)
%	= \frac{c}{\tau_{2}}
%	\geq \frac{c}{\tau}
%	\geq \frac{b}{2}
%	= \frac{1}{8} \sqrt{Mm}
%\end{equation}
%for some (random) $s\leq\tau_{2}$.
%We thus see that $P_{-}$ is bounded from above by the probability that $W_{2}(t)$ attains the value $c = \sqrt{Mm}/8$ before $t = 2c/b = 2\sqrt{M/m}$.
%Thus, recalling that the probability density of the hitting time $\tau_{w} = \inf\{t>0: W(t) = w\}$ is
%\begin{equation}
%f_{\tau_{w}}(t)
%	= \frac{w}{\sqrt{2\pi}} t^{-3/2} e^{-w^{2}/(2t)},
%\end{equation}
%some integral manipulations yield:
%\begin{equation}
%P_{-}
%	\leq \frac{1}{\sqrt{2\pi}} \frac{\sqrt{Mm}}{8} \int_{0}^{2\sqrt{M/m}} t^{-3/2} e^{-\frac{Mm}{128t}} \dd t
%	= \mathrm{erfc}\left[\frac{1}{16}m^{3/4} M^{1/4}\right].
%\end{equation}
%The above shows that the probability $P_{-}$ can also be made arbitrarily small if $M$ is chosen large enough, so we conclude that $\prob(\tau = \infty) \geq \sqrt{1-\eps}$ if $M$ is chosen sufficiently large.
We thus deduce that
\begin{equation}
\int_{0}^{t} W(s) \dd s
%	\leq \frac{1}{2}M + \frac{1}{4} mr(t-r/2) + a t^{2} + bt + c
	\leq \frac{1}{2} mt^{2} + M
	\quad
	\text{for all $t\geq0$}
\end{equation}
with probability no less than $1-\eps$.
%(recall that the Wiener processes $W_{1}$ and $W_{2}$ are independent).

Going back to \eqref{eq:dZ-cum2}, we see that $\int_{0}^{t}\theta(s) \dd s - \xi(t) - \frac{1}{2}mt^{2}$ remains below $M$ for all time with probability at least $1-\eps$ (simply use the probability estimate \eqref{eq:hitprob2} and argue as in the proof of Theorem \ref{thm:folk} recalling that the processes $W_{\alpha}$ are assumed independent).
In turn, this shows that $\prob(X(t) \in U \text{ for all } t\geq0) \geq 1-\eps$, so, conditioning on this last event and letting $t\to\infty$ in \eqref{eq:dZ-cum2}, we obtain:
\begin{equation}
\prob\left(\lim_{t\to\infty} Z_{\alpha}(t) = -\infty \,\Big\vert\, X(t) \in U \text{ for all } t\geq0 \right)
	= 1
	\quad
	\text{for all $\alpha\neq\alpha^{\ast}$.}
\end{equation}
We conclude that $X(t)$ remains in $U_{0}$ and $X(t) \to \eq$ with probability at least $1-\eps$, as was to be shown.
\end{proof}

%\PM{Should we discuss any variant dynamics here (like payoff-monotonic or what have you)? It is possible to obtain the same kind of results, but I don't think it merits more than a remark\dots what do you think?}
%\YV{I agree with you, a remark suffices. Note that for (I know you hate this term) payoff functional dynamics, there is not much to do since they correspond to the replicator dynamics in a modified game (which is not the mixed extension of a normal form, but since you do not assume it anyway). By the way, maybe we should explicitly write that the games we consider are quite general.}

%----------------------------------------------------------------------
%%% DISCUSSION
%----------------------------------------------------------------------
\section{Discussion}
\label{sec:discussion}
%----------------------------------------------------------------------
%%% DISCUSSION
%----------------------------------------------------------------------
% !TEX root = ./ShockDynamics.tex

In this section, we discuss some points that would have otherwise disrupted the flow of the main text:

%\begin{itemize}
%\item
%Stratonovich noise.
%\PM{Done.}
%
%\item
%$\hat U_{\alpha\beta} = U_{\alpha\beta} + \xi_{\alpha\beta}$
%\PM{Done.}
%
%\item
%Payoff monotonic stuff (possibly tricky).
%\PM{Prefer to kill:
%if we have payoff functional dynamics of the form $\phi(\payv)$, then we should consider a process based on $\phi(\payv+\xi)$, so we should argue about how to model the resulting equation in proper, Itô form.}
%\YV{Fine.}
%
%\item
%Mutations.
%\PM{Done.}
%
%\item
%Pairwise proportional imitation.
%\PM{Included a remark at the end of Section \ref{sec:dynamics}.}
%\end{itemize}

%----------------------------------------------------------------------
%%% RANDOM MATCHING
%----------------------------------------------------------------------
\subsection{Payoff shocks in bimatrix games}
\label{sec:matching}

Throughout our paper, we have worked with generic population games,
so we have not made any specific assumptions on the payoff shocks either.
%so the assumption that the payoff shocks are independent across strategies is a reasonable one.
On the other hand, if the game's payoff functions are obtained from some common underlying structure, then the resulting payoff shocks may also end up having a likewise specific form.
% and the payoff shocks arise from perturbations in this common structure, then this independence assumption is no longer relevant.

For instance, consider a basic (symmetric) random matching model where pairs of players are drawn randomly from a nonatomic population to play a symmetric two-player game with payoff matrix $V_{\alpha\beta}$, $\alpha,\beta=1,\dotsc,n$.
In this case, the payoff to an $\alpha$-strategist in the population state $x\in\strat$ will be of the form:
\begin{equation}
\label{eq:pay-matching}
\payv_{\alpha}(x)
	= \insum_{\beta} V_{\alpha\beta} x_{\beta}.
\end{equation}
%so the game's payoff functions are ``coupled'' to each other via the game's payoff matrix $V_{\alpha\beta}$.
Thus, if the entries of $V$ are disturbed at each $t\geq0$ by some (otherwise independent) white noise process $\xi_{\alpha\beta}$, the perturbed payoff matrix $\hat V_{\alpha\beta} = V_{\alpha\beta} + \xi_{\alpha\beta}$ will result in the total payoff shock:
%$\xi_{\alpha}$ to $\payv_{\alpha}$ will be:
\begin{equation}
\label{eq:noise-matching}
\xi_{\alpha}
	= \insum_{\beta} \xi_{\alpha\beta} x_{\beta}.
\end{equation}
The stochastic dynamics \eqref{eq:SRD-Langevin} thus become:
\begin{equation}
\label{eq:SRD-corr}
\begin{aligned}
dX_{\alpha}
	&= X_{\alpha} \left[ \insum_{\beta} V_{\alpha\beta} X_{\beta} - \insum_{\beta,\gamma}  V_{\beta\gamma} X_{\beta} X_{\gamma} \right] dt
	\\
	&+ X_{\alpha} \left[ \insum_{\beta} \sigma_{\alpha\beta} X_{\beta} \dd W_{\alpha\beta} - \insum_{\beta,\gamma} \sigma_{\beta\gamma} X_{\beta} X_{\gamma} \dd W_{\beta\gamma} \right],
\end{aligned}
\end{equation}
where the Wiener processes $W_{\alpha\beta}$ are assumed independent.

To compare \eqref{eq:SRD-corr} with the core model \eqref{eq:SRD}, the same string of calculations as in the proof of Theorems \ref{thm:dominated} and \ref{thm:folk} leads to the modified payoff functions:
\begin{equation}
\label{eq:pay-mod-corr}
\modpayv_{\alpha}(x)
	= \payv_{\alpha}(x) - \frac{1}{2} (1- 2x_{\alpha}) \insum_{\beta} \sigma_{\alpha\beta}^{2} x_{\beta}^{2}.
\end{equation}
It is then trivial to see that Theorems \ref{thm:dominated} and \ref{thm:folk} still apply with respect to the modified game $\modgame$ with payoff functions defined as above; 
%\YV{There is nothing to see: this is a particular case of the general model, the case where $\sigma_{\alpha}(x)= \insum \sigma_{\alpha \beta} x_{\beta} dW_{\alpha\beta}$ hence $\sigma^2= \insum_{\beta} \sigma_{\alpha\beta}^{2} x_{\beta}^{2}$. Or is there again a problem due to the fact that we cannot merge the Brownian terms so easily?}
%\PM{The theorems ``as stated'' do not cover this case. It's not a big deal, I'm just saying it to be precise: feel free to change/kill as you see fit.}
however, seeing as \eqref{eq:pay-mod-corr} is cubic in $x_{\alpha}$ and considering the case of constant noise, these modified payoff functions no longer correspond to random matching in a modified bimatrix game.
%\YV{I guess you mean this in the case of constant noise, otherwise anyway $\sigma_{\alpha}^2$ depend on $x$ so we cannot say that the correction corresponds to random matching in a bimatrix game.}
%\PM{Yes.}

%----------------------------------------------------------------------
%%% STRATONOVICH
%----------------------------------------------------------------------
\subsection{Stratonovich-type perturbations}
\label{sec:Stratonovich}

Depending on the origins of the payoff shock process (for instance, if there are nontrivial autocorrelation effects that do not vanish in the continuous-time regime), the perturbed dynamics \eqref{eq:SRD} could instead be written as a Stratonovich-type \ac{SDE} \citep{Kuo06}:
\begin{equation}
\label{eq:SRD-Strat}
\pd X_{\alpha}
	= X_{\alpha} \left[ \payv_{\alpha}(X) - \insum_{\beta} X_{\beta} \payv_{\beta}(X) \right] dt
	+ X_{\alpha} \left[ \sigma_{\alpha} \,\pd W_{\alpha} - \insum_{\beta} X_{\beta} \sigma_{\beta} \,\pd W_{\beta} \right],
\end{equation}
where $\pd(\argdot)$ denotes Stratonovich integration.%
\footnote{For a general overview of the differences between Itô and Stratonovich integration, see \cite{vK81};
for a more specific account in the context of stochastic population growth models, the reader is instead referred to \cite{KP06} and \cite{HI09}.}
In this case, if $M_{\alpha\beta} = X_{\alpha}(\delta_{\alpha\beta} - X_{\beta}) \sigma_{\beta}$ denotes the diffusion matrix of \eqref{eq:SRD-Strat}, the Itô equivalent \ac{SDE} corresponding to \eqref{eq:SRD-Strat} will be:
\begin{equation}
\label{eq:SRD-Ito}
\begin{aligned}
dX_{\alpha}
	&= X_{\alpha} \left[ \payv_{\alpha}(X) - \insum_{\beta} X_{\beta} \payv_{\beta}(X) \right] dt
	+ \frac{1}{2} \insum_{\beta,\gamma} \frac{\pd M_{\alpha\beta}}{\pd X_{\gamma}} M_{\gamma\beta} \dd t
	\\
	&+ X_{\alpha} \left[ \sigma_{\alpha} \dd W_{\alpha} - \insum_{\beta} X_{\beta} \sigma_{\beta} \dd W_{\beta} \right].
\end{aligned}
\end{equation}
Then, assuming that the shock coefficients $\sigma_{\beta}$ are constant, some algebra yields the following explicit expression for the Itô correction of \eqref{eq:SRD-Ito}:
\begin{flalign}
\label{eq:correction}
\frac{1}{2}	 \insum_{\beta,\gamma}
	&\frac{\pd M_{\alpha\beta}}{\pd X_{\gamma}} M_{\gamma\beta} \dd t
	\notag\\
	&= \frac{1}{2} \insum_{\beta,\gamma}
		\left( \delta_{\alpha\beta\gamma} - \delta_{\alpha\gamma} x_{\beta} - \delta_{\beta\gamma} x_{\alpha} \right) \sigma_{\beta}
		\cdot
		X_{\gamma} \left( \delta_{\gamma\beta} - X_{\beta} \right) \sigma_{\beta} \dd t
	\notag\\
	&= \frac{1}{2} \insum_{\beta,\gamma}
	\left[
	\delta_{\alpha\beta\gamma}(1 - 2X_{\beta}) + \delta_{\alpha\gamma} X_{\beta}^{2} - \delta_{\beta\gamma} X_{\alpha} + \delta_{\beta\gamma} X_{\alpha} X_{\beta}
	\right] X_{\gamma} \sigma_{\beta}^{2} \dd t
	\notag\\
	&= \frac{1}{2} X_{\alpha}
	\left[
	(1 - 2X_{\beta})\sigma_{\beta}^{2} - \insum_{\beta} X_{\beta} (1 - 2X_{\beta}) \sigma_{\beta}^{2}
	\right] \dd t.
\end{flalign}

By substituting this correction back to \eqref{eq:SRD-Ito}, we see that the replicator dynamics with Stratonovich shocks \eqref{eq:SRD-Strat} are equivalent to the (Itô) stochastic replicator dynamics of exponential learning \eqref{eq:SXRD}.
In this context, \cite{MM10} showed that the conclusions of Theorems \ref{thm:dominated} and \ref{thm:folk} apply directly to the original, unmodified game $\game$ under \eqref{eq:SXRD}, so dominated strategies become extinct and strict equilibria are stochastically asymptotically stable under \eqref{eq:SRD-Strat} as well.
Alternatively, this can also be seen directly from the correction term \eqref{eq:correction} which cancels with that of \eqref{eq:pay-mod}.
%\YV{Were the \dots an invitation for me to write a story? If so suppress them for now.}
%\PM{Yannick, this might be a good place to contrast this with your story (essentially saying that Stratonovich shocks are not ``bad'' in the same way that Itô shocks are ``bad'').}
%\YV{Kind of changed my mind since I do not think anymore that shocks are bad in our model, so no remark to put here.}

%----------------------------------------------------------------------
%%% MUTATIONS
%----------------------------------------------------------------------
\subsection{Random strategy switches}

An alternative source of noise to the players' evolution under \eqref{eq:RD} could come from random masses of players that switch strategies without following an underlying deterministic drift \textendash\ as opposed to jumps with a well-defined direction induced by a revision protocol.
To model this kind of ``mutations'', we posit that the relative mass $dX_{\alpha\beta}$ of players switching from $\alpha$ to $\beta$ over an infinitesimal time interval $dt$ is governed by the \ac{SDE}:
\begin{equation}
\label{eq:jumps}
dX_{\alpha\beta}
	= X_{\alpha}\left(\rho_{\alpha\beta} \dd t + dM_{\alpha\beta} \right),
\end{equation}
where $dM_{\alpha\beta}$ denotes the (conditional) mutation rate from $\alpha$ to $\beta$.

To account for randomness, we will assume that $M_{\alpha\beta}$ has unbounded variation over finite time intervals (contrary to the bounded variation drift term $X_{\alpha} \rho_{\alpha\beta} \dd t$).
Moreover, for concreteness, we will focus on the imitative regime where $\rho_{\alpha\beta} = x_{\beta} r_{\alpha\beta}$ and the mutation processes $M_{\alpha\beta}$ follow a similar imitative pattern, namely $dM_{\alpha\beta} = X_{\beta} dR_{\alpha\beta}$.
%for some local martingale $R_{\alpha\beta}$.
The net change in the population of $\alpha$-strategists will then be
\begin{equation}
\insum_{\beta} X_{\beta} \dd M_{\beta\alpha}
	- X_{\alpha} \insum_{\beta} \dd M_{\alpha\beta}
	= X_{\alpha} \insum_{\beta} X_{\beta} \dd Q_{\beta\alpha},
\end{equation}
where $dQ_{\beta\alpha} = dR_{\beta\alpha} - dR_{\alpha\beta}$ describes the \emph{net influx} of $\beta$-strategists in strategy $\alpha$ per unit population mass.
Thus, assuming that the increments $dQ_{\beta\alpha}$ are zero-mean, we will model $Q$ as an Itô process of the form:
\begin{equation}
\label{eq:mutation}
dQ_{\alpha\beta}
	= \eta_{\alpha\beta}(X) \dd W_{\alpha\beta}
\end{equation}
where $W_{\alpha\beta}$ is an ordinary Wiener process and the diffusion coefficients $\eta_{\alpha\beta}\from\strat\to\R$ reflect the intensity of the mutation process.
In particular, the only assumptions that we need to make for $W$ and $\eta$ are that:
\begin{equation}
\label{eq:symmetry}
dW_{\alpha\beta}
	= -dW_{\beta\alpha}
	\quad
	\text{and}
	\quad
\eta_{\alpha\beta}
	= \eta_{\beta\alpha}
	\quad
	\text{for all $\alpha,\beta\in\act$ and for all $k\in\play$,}
\end{equation}
%for all $\alpha,\beta\in\act$ and for all $k\in\play$,
so that the net influx from $\alpha$ to $\beta$ is minus the net influx from $\beta$ to $\alpha$;
except for this ``conservation of mass'' requirement, we will assume that the processes $dW_{\alpha\beta}$ are otherwise independent. 
Thus, in the special case of the ``imitation of success'' revision protocol \eqref{eq:success}, we obtain the \emph{replicator dynamics with random mutations}:
\begin{equation}
\label{eq:RMRD}
%\tag{RMRD}
%\begin{aligned}
dX_{\alpha}
	= X_{\alpha} \left[ \payv_{\alpha}(X) - \insum_{\beta} X_{\beta} \payv_{\beta}(X) \right] dt
	+ X_{\alpha} \insum_{\beta\neq\alpha} X_{\beta} \eta_{\beta\alpha}(X) \dd W_{\beta\alpha}.
%\end{aligned}
\end{equation}

This equation differs from \eqref{eq:SRD} in that the martingale term of \eqref{eq:SRD} cannot be recovered from that of \eqref{eq:RMRD} without violating the symmetry conditions \eqref{eq:symmetry} that guarantee that there is no net transfer of mass across any pair of strategies $\alpha,\beta\in\act$.
Nonetheless, by repeating the same analysis as in the case of Theorems \ref{thm:dominated} and \ref{thm:folk}, we obtain the following proposition for the stochastic dynamics \eqref{eq:RMRD}:

\begin{proposition}
\label{prop:mutations}
Let $X(t)$ be an interior solution orbit of the stochastic dynamics \eqref{eq:RMRD} and consider the noise-adjusted game $\mutgame$ with modified payoff functions:
\begin{equation}
\label{eq:pay-mod-mutations}
\mutpayv_{\alpha}(x)
	= \payv_{\alpha}(x)
	- \frac{1}{2} \insum_{\beta\neq\alpha} x_{\beta}^{2} \eta_{\beta\alpha}^{2}(x).
\end{equation} 
%\YV{I probably remember badly, but I thought that in the first version of the paper (with both mutation and payoff shocks) the mutation term did not create trouble. Or is it that it did bring a correction (in the sense of a modified game) but that it was easier to deal with mathematically?}
%\YV{Other remark (quite at odd with the first one): isn't it the case that the summation should be taken on all $\gamma \neq \alpha$? Otherwise the term $\eta_{\alpha\alpha}$ matters and it should not.  By the way, why summing on $\gamma$ and not $\beta$ as elsewhere?}
%\PM{Your second remark is correct. You had actually raised this point in an earlier iteration of the paper, but I must have missed it in a copy-paste.}
%\PM{To make things more clear, I changed the summation index in \eqref{eq:RMRD} as well.}
We then have:
\begin{enumerate}
[\indent\upshape(1)]
\item
If $p\in\strat$ is dominated in $\mutgame$, then it becomes extinct under \eqref{eq:RMRD}.
\item
If $\prob\left(\lim_{t\to\infty} X(t) = \eq\right) > 0$ for some $\eq\in\strat$, then $\eq$ is a Nash equilibrium of $\mutgame$.
\item
If $\eq\in\strat$ is stochastically Lyapunov stable, then it is a Nash equilibrium of $\mutgame$.
\item
If $\eq\in\strat$ is a strict Nash equilibrium of $\mutgame$, then it is stochastically asymptotically stable under \eqref{eq:RMRD}.
\end{enumerate}
\end{proposition}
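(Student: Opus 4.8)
The plan is to reduce everything to the logarithmic dynamics, exactly as in the proofs of Theorems \ref{thm:dominated} and \ref{thm:folk}. First I would set $Y_{\alpha} = \log X_{\alpha}$ and apply Itô's lemma to \eqref{eq:RMRD}. Writing the martingale part of \eqref{eq:RMRD} as $dN_{\alpha} = X_{\alpha} \insum_{\beta\neq\alpha} X_{\beta} \eta_{\beta\alpha}(X)\,dW_{\beta\alpha}$ and using that, for fixed $\alpha$, the Wiener processes $\{W_{\beta\alpha}\}_{\beta\neq\alpha}$ are mutually independent (the symmetry relation $dW_{\alpha\beta} = -dW_{\beta\alpha}$ links $W_{\beta\alpha}$ only to $W_{\alpha\beta}$, not to any other $W_{\gamma\alpha}$), one gets $(dX_{\alpha})^{2} = (dN_{\alpha})^{2} = X_{\alpha}^{2} \insum_{\beta\neq\alpha} X_{\beta}^{2} \eta_{\beta\alpha}^{2}(X)\,dt$. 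Substituting into $dY_{\alpha} = dX_{\alpha}/X_{\alpha} - \tfrac{1}{2}(dX_{\alpha})^{2}/X_{\alpha}^{2}$ and recognising the Itô correction $-\tfrac{1}{2}\insum_{\beta\neq\alpha}X_{\beta}^{2}\eta_{\beta\alpha}^{2}(X)$ as precisely the difference $\mutpayv_{\alpha}(X) - \payv_{\alpha}(X)$ (cf.\ \eqref{eq:pay-mod-mutations}) yields
\[
dY_{\alpha} = \bigl[\mutpayv_{\alpha}(X) - \smallbraket{\payv(X)}{X}\bigr]\,dt + \insum_{\beta\neq\alpha} X_{\beta} \eta_{\beta\alpha}(X)\,dW_{\beta\alpha},
\]
which is the exact analogue of \eqref{eq:dY2}, with $\modpayv$ replaced by $\mutpayv$ and the common term $\smallbraket{\payv(X)}{X}$ playing the role it did there.

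From this identity the four claims follow by transcribing the earlier arguments. For (1) I would use the cross-entropy function $V(x) = \dkl(p,x) - \dkl(p',x) = \mathrm{const} + \insum_{\alpha}(p_{\alpha}' - p_{\alpha})\log x_{\alpha}$; since $\insum_{\alpha}(p_{\alpha}' - p_{\alpha}) = 0$ the common term cancels and $dV = \smallbraket{\mutpayv(X)}{p' - p}\,dt + d\xi$, with $\xi$ a martingale whose integrand is bounded ($\eta$ and $X$ bounded on $\strat$), so its quadratic variation grows at most linearly. Domination in $\mutgame$ gives $\smallbraket{\mutpayv(x)}{p' - p}\geq m > 0$, and Lemma \ref{lem:Wbound} then forces $V(X(t))\sim mt \to \infty$, hence $\dkl(p,X(t))\to\infty$ and $p$ becomes extinct. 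For (2) and (3), I would examine $Y_{\alpha} - Y_{\beta}$ (resp.\ $Z_{\alpha} = Y_{\alpha} - Y_{\alpha^{\ast}}$): the common term cancels, the drift becomes $\mutpayv_{\alpha} - \mutpayv_{\beta}$, and the argument of Theorem \ref{thm:folk}, Parts 1--2, applies verbatim to conclude that an $\omega$-limit attained with positive probability, or a stochastically Lyapunov stable point, must be Nash for $\mutgame$. For (4), with $\eq$ a strict equilibrium of $\mutgame$ one has $\mutpayv_{\alpha^{\ast}} - \mutpayv_{\alpha} \geq m$ on a neighbourhood $U$ of $\eq$; writing $dZ_{\alpha} = [\mutpayv_{\alpha} - \mutpayv_{\alpha^{\ast}}]\,dt - d\xi_{\alpha}$ with $[\xi_{\alpha},\xi_{\alpha}](t)\leq Kt$, the time-change theorem and Lemma \ref{lem:hitprob} (comparison with the hitting time of the line $t\mapsto -M - mt/K$ by a standard Wiener process, union-bounded over the finitely many $\alpha\in\act^{\ast}$ after replacing $\eps$ by $\eps/\abs{\act^{\ast}}$) give $\prob(\tau_{U} = \infty)\geq 1-\eps$ for $M$ large, and on that event Lemma \ref{lem:Wbound} yields $Z_{\alpha}(t)\to-\infty$, i.e.\ $X(t)\to\eq$; this is stochastic asymptotic stability.

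The one genuinely new point --- and the step I expect to be the main (if modest) obstacle --- is keeping the quadratic (co)variation bookkeeping straight, since here a single Wiener process $W_{\alpha\beta} = -W_{\beta\alpha}$ feeds into the equations for both $X_{\alpha}$ and $X_{\beta}$, unlike in \eqref{eq:SRD} where the $W_{\alpha}$ are fully independent. One must check that this shared noise breaks nothing: for the diagonal term $(dX_{\alpha})^{2}$ it does not, because only $\{W_{\beta\alpha}\}_{\beta\neq\alpha}$ enters and these are mutually independent; the off-diagonal couplings appear only in the cross-variations inside $d\xi$ and $d(Y_{\alpha} - Y_{\beta})$, and there it suffices to note that the resulting integrands remain bounded (by boundedness of $X$ and of $\eta$ on $\strat$), so that all quadratic variations grow at most linearly --- exactly the hypothesis needed to invoke Lemmas \ref{lem:Wbound} and \ref{lem:hitprob} as in the earlier proofs. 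Beyond this, no new ideas are required, and I would present the argument as a routine adaptation of the proofs of Theorems \ref{thm:dominated} and \ref{thm:folk}.
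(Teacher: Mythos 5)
Your proposal is correct and takes exactly the route the paper intends: the paper in fact omits this proof entirely, remarking only that it is ``similar to that of Theorems \ref{thm:dominated} and \ref{thm:folk}'', and your write-up supplies precisely that adaptation, with the key step being the computation $(dX_{\alpha})^{2} = X_{\alpha}^{2}\insum_{\beta\neq\alpha}X_{\beta}^{2}\eta_{\beta\alpha}^{2}(X)\,dt$ that identifies the Itô correction with the payoff adjustment \eqref{eq:pay-mod-mutations}. Your care over the shared noise $W_{\alpha\beta}=-W_{\beta\alpha}$ (regrouping the martingale integrands over unordered pairs so that boundedness suffices for Lemmas \ref{lem:Wbound} and \ref{lem:hitprob}) addresses the one point the paper glosses over, and nothing further is needed.
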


\begin{proof}
The proof is similar to that of Theorems \ref{thm:dominated} (for Part 1) and \ref{thm:folk} (for Parts 2\textendash 4), so we omit it.
\end{proof}

%\PM{Yannick, this might be a good point for any remarks you might have regarding the qualitative differences between $\modgame$ and $\mutgame$.}
%\YV{I agree, and indeed, there should be remarks to be made, but no idea yet, so let's keep that for the revised version.}

%----------------------------------------------------------------------
%%% APPENDIX
%----------------------------------------------------------------------
\appendix
\section{Auxiliary results from stochastic analysis}
\label{app:Brownian}
%----------------------------------------------------------------------
%%% APPENDIX: BROWNIAN
%----------------------------------------------------------------------
% !TEX root = ./ShockDynamics.tex

In this appendix, we state and prove two auxiliary results from stochastic analysis that were used throughout the paper.
Lemma \ref{lem:Wbound} is an asymptotic growth bound for Wiener processes relying on the law of the iterated logarithm, while Lemma \ref{lem:hitprob} is a calculation of the probability that a Wiener process starting at the origin hits the line $a+bt$ in finite time.
Both lemmas appear in a similar context in \cite{BM14};
we provide a proof here only for completeness and ease of reference.

\begin{lemma}
\label{lem:Wbound}
Let $W(t) = (W_{1}(t),\dotsc,W_{n}(t))$, $t\geq0$, be an $n$-dimensional Wiener processes and let $Z(t)$ be a bounded, continuous process in $\R^{n}$.
Then:
\begin{equation}
\label{eq:Wbound}
f(t) + \int_{0}^{t} Z(s) \cdot dW(s)
	\sim f(t)
	\quad
	\text{as $t\to\infty$ \textup(a.s.\textup),}
\end{equation}
for any function $f\from[0,\infty)\to\R$ such that $\lim_{t\to\infty} \left(t\log\log t\right)^{-1/2} f(t) = +\infty$.
\end{lemma}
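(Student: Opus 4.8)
The plan is to reduce the statement to showing that the stochastic integral $M(t) = \int_0^t Z(s)\cdot dW(s)$ is negligible compared with $f(t)$. Since $(t\log\log t)^{1/2}\to\infty$, the growth hypothesis forces $f(t)\to+\infty$ (in particular $f(t)>0$ for large $t$), so it suffices to prove $M(t)/f(t)\to 0$ almost surely: then $f(t)+M(t)=f(t)\bigl(1+M(t)/f(t)\bigr)\sim f(t)$. Here $M$ is a continuous local martingale; writing $K=\sup_{s\ge0}\norm{Z(s)}<\infty$, its quadratic variation satisfies $[M]_t=\int_0^t\norm{Z(s)}^2\,ds\le K^2 t$, so $M$ is in fact a square-integrable martingale and $[M]$ is nondecreasing. (If $K=0$ there is nothing to prove, so assume $K>0$.)

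First I would dispose of the degenerate case $[M]_\infty<\infty$. In that case $M$ is an $L^2$-bounded martingale, hence converges almost surely to a finite limit, so $M(t)=O(1)$ while $f(t)\to\infty$, and the claim is immediate.

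In the main case $[M]_\infty=+\infty$, I would invoke the Dambis--Dubins--Schwarz theorem to write $M(t)=B\bigl([M]_t\bigr)$ for a standard one-dimensional Wiener process $B$ (on a possibly enlarged probability space). The law of the iterated logarithm for $B$ gives $\limsup_{s\to\infty}\bigl(2s\log\log s\bigr)^{-1/2}\smallabs{B(s)}=1$ almost surely, so for every $\eps>0$ one has $\smallabs{B(s)}\le(1+\eps)\sqrt{2s\log\log s}$ for all large $s$. Since $s\mapsto\sqrt{s\log\log s}$ is eventually nondecreasing and $[M]_t\le K^2 t$, substituting $s=[M]_t$ yields $\smallabs{M(t)}\le(1+\eps)\sqrt{2K^2 t\log\log(K^2 t)}$ for all large $t$; as $\log\log(K^2 t)/\log\log t\to1$, this says $\smallabs{M(t)}=O\bigl(\sqrt{t\log\log t}\bigr)$ almost surely. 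Dividing by $f(t)$ and using the hypothesis $\bigl(t\log\log t\bigr)^{-1/2}f(t)\to+\infty$, we conclude $M(t)/f(t)\to0$ almost surely, which finishes the argument.

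The only real subtlety — and the step I would be most careful with — is this case split on $[M]_\infty$ combined with the fact that we have only the inequality $[M]_t\le K^2 t$ rather than an identity: one must check that the LIL bound for $B$ transfers through the random, merely nondecreasing time change $t\mapsto[M]_t$, which is precisely why monotonicity of $s\mapsto\sqrt{s\log\log s}$ together with the crude linear bound on $[M]_t$ suffices. If one prefers to avoid the DDS time change and stay on a single probability space, an alternative is to bound $\sup_{t\in[2^k,2^{k+1}]}\smallabs{M(t)-M(2^k)}$ via Doob's $L^2$ maximal inequality on dyadic blocks and conclude by Borel--Cantelli; both routes give the same $O\bigl(\sqrt{t\log\log t}\bigr)$ estimate.
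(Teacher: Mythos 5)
Your proof is correct and follows essentially the same route as the paper's: both reduce to the martingale term, apply the Dambis--Dubins--Schwarz time change, split on whether the quadratic variation $[M]_\infty$ is finite, and in the unbounded case combine the law of the iterated logarithm with the linear bound $[M]_t \leq K^2 t$ to get the $O\bigl(\sqrt{t\log\log t}\bigr)$ estimate. The only difference is cosmetic: you make the monotonicity of $s\mapsto\sqrt{s\log\log s}$ explicit where the paper handles it by writing the bound as a product of two ratios.
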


\begin{proof}
Let $\xi(t) = \int_{0}^{t} Z(s) \cdot dW(s) = \sum_{i=1}^{n} \int_{0}^{t} Z_{i}(s) \dd W_{i}(s)$.
Then, the quadratic variation $\rho = [\xi,\xi]$ of $\xi$ satisfies:
\begin{equation}
\label{eq:covest1}
d[\xi,\xi]
	= d\xi \cdot d\xi
	= \sum_{i=1}^{n} Z_{i} Z_{j} \delta_{ij} \dd t
	\leq M \dd t,
\end{equation}
where $M = \sup_{t\geq0} \norm{Z(t)}^{2} < +\infty$ (recall that $Z(t)$ is bounded by assumption).
On the other hand, by the time-change theorem for martingales \citep[Corollary~8.5.4]{Oks07}, there exists a Wiener process $\wilde W(t)$ such that $\xi(t) = \wilde W(\rho(t))$, and hence:
\begin{equation}
\frac{f(t) + \xi(t)}{f(t)}
	= 1 + \frac{\wilde W(\rho(t))}{f(t)}.
\end{equation}

Obviously, if $\lim_{t\to\infty} \rho(t) \equiv \rho(\infty) < +\infty$, $\wilde W(\rho(\infty))$ is normally distributed so $\wilde W(\rho(t))/f(t) \to 0$ and there is nothing to show.
Otherwise, if $\lim_{t\to\infty} \rho(t) = +\infty$, the quadratic variation bound \eqref{eq:covest1} and the law of the iterated logarithm yield:
\begin{equation}
\frac{\big\vert\wilde W(\rho(t))\big\vert}{f(t)}
	\leq \frac{\big\vert\wilde W(\rho(t)) \big\vert}{\sqrt{2\rho(t) \log \log \rho(t)}}
	\times \frac{\sqrt{2Mt \log \log Mt}}{f(t)}
	\to 0
	\quad
	\text{as $t\to\infty$,}
\end{equation}
and our claim follows.
\end{proof}

\begin{lemma}
\label{lem:hitprob}
Let $W(t)$ be a standard one-dimensional Wiener process and consider the hitting time $\tau_{a,b} = \inf\{t>0: W(t) = a + bt\}$, $a,b\in\R$.
Then:
\begin{equation}
\label{eq:hitprob}
\prob\left(\tau_{a,b} < \infty\right)
	= \exp(-ab-\abs{ab}).
\end{equation}
\end{lemma}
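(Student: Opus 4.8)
The plan is to reduce, by symmetry and soft recurrence arguments, to the nondegenerate same-sign case $a,b>0$, and to treat that case by optional stopping applied to the exponential martingale of the Wiener process. First I would record the reduction. Since $-W$ is again a standard Wiener process and $\tau_{a,b}$ for $W$ equals $\tau_{-a,-b}$ for $-W$, while the target value $\exp(-ab-\abs{ab})$ is invariant under $(a,b)\mapsto(-a,-b)$, we may assume $b\ge0$. If $a\le0\le b$ then $ab\le0$ and $\exp(-ab-\abs{ab})=1$; and indeed $\tau_{a,b}<\infty$ a.s., because the continuous path $t\mapsto W(t)-bt$ starts at $0\ge a$ and satisfies $\liminf_{t\to\infty}\bigl(W(t)-bt\bigr)=-\infty$ a.s.\ (immediate from $W(t)/t\to0$ when $b>0$, and the recurrence of one-dimensional Brownian motion when $b=0$), so by the intermediate value theorem it meets the level $a$ (and $\tau_{0,b}=0$ a.s.\ when $a=0$). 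Similarly, when $a>0$ and $b=0$, $\tau_{a,0}$ is the hitting time of the level $a$, which is a.s.\ finite, again matching $\exp(0)=1$.

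For the remaining case $a>0$, $b>0$ --- where the claim reads $\prob(\tau_{a,b}<\infty)=e^{-2ab}$ --- I would use the positive martingale $N_{t}=\exp\bigl(2bW(t)-2b^{2}t\bigr)=\exp\bigl(2b(W(t)-bt)\bigr)$, which has $\ex[N_{0}]=1$. Optional stopping at the bounded stopping time $\tau_{a,b}\wedge T$ gives $\ex[N_{\tau_{a,b}\wedge T}]=1$ for every $T>0$; on $\{\tau_{a,b}\le T\}$ one has $W(\tau_{a,b})=a+b\tau_{a,b}$, hence $N_{\tau_{a,b}}=e^{2ab}$, so
\[
e^{2ab}\,\prob(\tau_{a,b}\le T)+\ex\bigl[N_{T}\,\one_{\{\tau_{a,b}>T\}}\bigr]=1 .
\]
On $\{\tau_{a,b}>T\}$, continuity of $W$ together with $W(0)=0<a$ forces $W(s)-bs<a$ for all $s\le T$, so $0\le N_{T}\,\one_{\{\tau_{a,b}>T\}}\le e^{2ab}$; moreover this quantity converges to $0$ a.s.\ as $T\to\infty$, being eventually $0$ on $\{\tau_{a,b}<\infty\}$ and, on $\{\tau_{a,b}=\infty\}$, equal to $N_{T}=\exp\bigl(2b(W(T)-bT)\bigr)\to0$ because $b>0$ and $W(T)/T\to0$. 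Dominated convergence removes the second term, while $\prob(\tau_{a,b}\le T)\uparrow\prob(\tau_{a,b}<\infty)$, so letting $T\to\infty$ yields $e^{2ab}\prob(\tau_{a,b}<\infty)=1$, i.e.\ $\prob(\tau_{a,b}<\infty)=e^{-2ab}=\exp(-ab-\abs{ab})$, completing the proof.

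I expect the limiting step in the optional-stopping identity to be the only delicate point: one must simultaneously control $N_{T}\,\one_{\{\tau_{a,b}>T\}}$ from above --- which uses the strict inequality $W(T)<a+bT$ valid before the first passage --- and show it vanishes on $\{\tau_{a,b}=\infty\}$, which is exactly where the positivity of $b$ enters through $W(T)-bT\to-\infty$. The reflection reduction and the almost-sure finiteness claims in the degenerate cases are routine consequences of the continuity and recurrence of Brownian motion.
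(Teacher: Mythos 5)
Your proof is correct, and it takes a genuinely different route from the paper's. The paper applies Girsanov's theorem to change measure so that $\overline W(t)=W(t)-bt$ becomes a Brownian motion, reducing $\tau_{a,b}$ to the first passage of a driftless Brownian motion to the level $a$, and then quotes the Laplace transform $\ex[\exp(-\lambda\tau_{a})]=\exp(-\abs{a}\sqrt{2\lambda})$ to evaluate the resulting expectation; the sign structure $\exp(-ab-\abs{ab})$ drops out of that formula in one stroke, with no case distinction. You instead reduce by the reflection $W\mapsto -W$ to $b\ge0$, dispose of the cases $ab\le0$ by recurrence and the intermediate value theorem, and handle the nondegenerate case $a,b>0$ by optional stopping of the exponential martingale $\exp(2bW(t)-2b^{2}t)$ at $\tau_{a,b}\wedge T$, with the correct uniform bound $N_{T}\one_{\{\tau_{a,b}>T\}}\le e^{2ab}$ (from $W(s)<a+bs$ before first passage) justifying the passage $T\to\infty$. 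Your version is more elementary and self-contained — it uses neither Girsanov nor the quoted Laplace transform, which is itself customarily derived from the very same exponential martingale — at the price of an explicit case analysis; the paper's version is shorter given the imported facts and avoids splitting on the signs of $a$ and $b$. Both arguments are complete, and the delicate point you flag (controlling the term on $\{\tau_{a,b}>T\}$ while showing it vanishes on $\{\tau_{a,b}=\infty\}$) is handled correctly.
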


\begin{proof}
Let $\overline W(t) = W(t) - bt$ so that $\tau_{a,b} = \inf\{t>0: \overline W(t) = a\}$.
By Girsanov's theorem \citep[see e.g.][Chap.~8]{Oks07}, there exists a probability measure $\Q$ such that
\begin{inparaenum}
[\itshape a\upshape)]
\item
$\overline W$ is a Brownian motion with respect to $\Q$;
and
\item
the Radon\textendash Nikodym derivative of $\Q$ with respect to $\mathbb P$ satisfies
\begin{equation}
\left.\frac{d\Q}{d\mathbb P}\right\vert_{\filter_{t}}
	= \exp\left(-b^{2}t/2 + b W(t) \right)
	= \exp\left(b^{2}t/2 - b\overline W(t)\right),
\end{equation}
where $\filter_{t}$ denotes the natural filtration of $W(t)$.
\end{inparaenum}
% Let $\one_E$ be the indicator function of the event $E$.
We then get
%\PM{I'm not sure about the second line:
%if there is a ``$-$'' earlier, there should be a ``$+$'' sign here;
%if not, it's fine.}
\begin{flalign}
\prob\left(\tau_{a,b} < t\right)
	&= \ex_{\mathbb P}\left[ \one(\tau_{a,b}<t) \right]
	\notag\\
	&= \ex_{\Q} \left[ \one(\tau_{a,b}<t) \cdot \exp(-b^2t/2  -b \overline W(t)) \right]
	\notag\\
	&= \ex_{\Q} \left[ \one(\tau_{a,b}<t) \cdot \exp(-b^2\tau_{a,b}/2  -b \overline W(\tau_{a,b})) \right]
	\notag\\
	&= \exp(-ab)\mathbb E_{\mathbb Q} \left[ \one(\tau_{a,b}<t) \cdot \exp(-b^2\tau_{a,b}/2) \right],
\end{flalign}
and hence:
\begin{flalign}
\prob\left(\tau_{a,b} < \infty\right)
	&= \lim_{t \to \infty } \prob \left(\tau_{a,b}<t\right)
	\notag\\
	&= \lim_{t \to\infty }\exp(-ab) \ex_{\Q} \left[ \one(\tau_{a,b}<t) \cdot \exp(-b^2\tau_{a,b}/2) \right]
	\notag\\
	&= \exp(-ab) \ex_{\Q} \left[\exp(-b^2\tau_{a,b}/2) \right]
	\notag\\
	&= \exp(-ab-\abs{ab}),
\end{flalign}
where, in the last step, we used the expression $\ex[\exp(-\lambda \tau_{a})] = \exp(-a\sqrt{2\lambda})$ for the Laplace transform of the Brownian hitting time $\tau_{a} = \inf\{t>0:W(t) = a\}$ \citep{KS98}.
\end{proof}

%----------------------------------------------------------------------
%%% ACKNOWLEDGEMENTS
%----------------------------------------------------------------------
%\begin{acknowledgements}
%Acks go here.
%\end{acknowledgements}

%*************************************************************
%*****    BIBLIOGRAPHY
%*************************************************************
\bibliographystyle{ametsoc}
\bibliography{IEEEabrv,ShockDynamics}

\end{document}